    \newtheorem{theorem}{Theorem}[section]
    \newtheorem{lemma}[theorem]{Lemma}
    \theoremstyle{definition}
    \newtheorem{definition}[theorem]{Definition}
    \newcounter{smalllist}
    \numberwithin{equation}{section}
    \newcommand{\lb}{\label}
    \newcommand{\supp}{\text{\rm{supp}}}
    \newcommand{\beq}{\begin{equation}}
    \newcommand{\eeq}{\end{equation}}
    \newcommand{\bal}{\begin{align}}
    \newcommand{\eal}{\end{align}}
    \newcommand{\bals}{\begin{align*}}
    \newcommand{\eals}{\end{align*}}
    \newcommand{\bbN}{{\mathbb{N}}}
    \newcommand{\bbR}{{\mathbb{R}}}
    \newcommand{\bbP}{{\mathbb{P}}}
    \newcommand{\bbE}{{\mathbb{E}}}
    \newcommand{\bbZ}{{\mathbb{Z}}}
    \newcommand{\bbS}{{\mathbb{S}}}
    \newcommand{\calS}{{\mathcal S}}
    \newcommand{\calB}{{\mathcal B}}
    \newcommand{\calL}{{\mathcal L}}
    \newcommand{\calF}{{\mathcal F}}
    \newcommand{\eps}{\varepsilon}
\begin{document}
    \title[Homogenization for Space-Time-Dependent KPP Reactions]
    {Homogenization for Space-Time-Dependent KPP Reaction-Diffusion Equations and G-Equations}
    
    \author{Yuming Paul Zhang and Andrej Zlato\v s}
    
    \address{\noindent Department of Mathematics \\ University of
    California San Diego \\ La Jolla, CA 92093 \newline Email: \tt
    yzhangpaul@ucsd.edu, zlatos@ucsd.edu}
    
    
    
    \begin{abstract} 
We prove stochastic homogenization for reaction-advection-diffusion equations with random space-time-dependent KPP reactions with  temporal correlations that are decaying in an appropriate sense.  
We show that the limiting homogenized dynamic has the simple form of spreading with some deterministic direction-dependent speeds from the support of the initial datum.  We obtain analogous results for  G-equations with random flame speeds and incompressible background advections.
Important ingredients in our proofs are a non-autonomous subadditive theorem and the principle of virtual linearity for  KPP reactions from the companion papers \cite{ZhaZla4, ZlaKPPspread}.
    \end{abstract}
    
    \maketitle

    \section{Introduction and Main Results} \lb{S1}

We study long time behavior of solutions to models of reactive processes, such as combustion and population dynamics, in random environments --- specifically, reaction-diffusion equations and G-equations.  The former are the PDE
\beq\lb{1.0}
u_t=\calL_\omega u+f(t,x,u,\omega),
\eeq
with $f:\bbR^{d+1}\times[0,1]\times\Omega\to\bbR$ some non-linear reaction function, a second-order linear term
\beq\lb{1.0'}
\calL_\omega u(t,x):=\sum_{i,j=1}^d A_{ij}(t,x,\omega)u_{x_ix_j}(t,x)+\sum_{i=1}^d b_i(t,x,\omega)u_{x_i}(t,x),
\eeq
and $\omega$ an element from some probability space $(\Omega,\bbP,\calF)$.
One also typically assumes that $f$ vanishes at $u=0,1$, and  solutions $0\le u\le 1$ represent normalized temperature or density, which is subject to reaction, advection, and diffusion.  The simplest reaction-diffusion model involves $\calL_\omega \equiv \Delta_x$ and $f=f(u)$, but we will consider here the general non-isotropic, space-time-dependent,  random reaction-advection-diffusion setting  of \eqref{1.0}.

We will mainly concentrate on this case, but our methods equally apply to the related first-order flame propagation model 
\beq\lb{G1.1}
u_t+v(t,x,\omega)\cdot \nabla u=c(t,x,\omega)|\nabla u|.
\eeq
This Hamilton-Jacobi PDE is called  the G-equation (it is often considered with $c\equiv 1$ only), where $c>0$ is the flame speed  and   $v$ is some (incompressible) background advection.

We will consider \eqref{1.0} with the KPP (a.k.a.~Fisher-KPP) reactions,  first studied by Kolmogorov, Petrovskii, and Piskunov \cite{KPP} and Fisher \cite{Fisher} in 1937.  We will therefore assume the following uniform KPP hypotheses.

\begin{definition} \lb{D.1.0}
A Lipschitz function $f:\bbR^{d+1}\times [0,1] \times\Omega\to\bbR$ is a {\it KPP reaction} if $f(\cdot,\cdot,0,\cdot)\equiv 0\equiv f(\cdot,\cdot,1,\cdot)$ and  $f(t,x,u,\omega)\le f_u(t,x,0,\omega)u $ for all $(t,x,u,\omega)\in \bbR^d\times [0,1]  \times \Omega$ (with $f_u(\cdot,\cdot,0,\cdot)$ existing pointwise), plus the following uniform hypotheses hold. 
We have  $\inf_{(t,x,\omega)\in\bbR^{d+1}\times\Omega}f(t,x,u,\omega)>0$ for each $u\in(0,1)$, as well as $\inf_{(t,x,\omega)\in\bbR^{d+1}\times\Omega}f_u(t,x,0,\omega)>0$ and   
\beq\lb{0.5}
\lim_{u\to 0}\sup_{(t,x,\omega)\in\bbR^{d+1}} \left( f_u(t,x,0,\omega)-\frac{f(t,x,u,\omega)}{u} \right)=0.
\eeq
\end{definition}

When physical processes occur in random media, one often expects them to exhibit an effectively homogeneous dynamic on large space-time scales due to large-scale averaging of the variations in the environment.  Our main results show that this phenomenon, called {\it homogenization}, indeed occurs for \eqref{1.0} and \eqref{G1.1} in very general settings under suitable hypotheses.  The main two of the latter are always stationarity of the environment and some mixing assumption on it, without which one cannot reasonably hope for homogenization to occur.  We state our versions of these next, with $H$ being either $(A,b,f_u(\cdot,\cdot,0,\cdot))$ or $(c,v)$ (see below for why it suffices to only include $f_u(\cdot,\cdot,0,\cdot)$  here in the KPP reaction case).

\begin{definition}\lb{D.2.3}
Let  $(\Omega,\calF,\bbP)$ be some probability space and $H$ a measurable function on $\bbR^{d+1}\times\Omega$ with values in some measurable space.  We say that $H$ is {\it space-time stationary} if there is a group of measure-preserving bijections $\{{\Upsilon_{(s,y)}:\Omega\to\Omega}\}_{(s,y)\in \bbR^{d+1}}$ with $\Upsilon_{(0,0)}={\rm Id}_{\Omega}$ and  $\Upsilon_{(s,y)}\circ\Upsilon_{(r,z)}=\Upsilon_{(s+r,y+z)}$ for any $(s,y),(r,z)\in \bbR^{d+1}$, and  for any $(t,x,s,y,\omega)\in   \bbR^{2d+2}\times\Omega$ we have
\beq \lb{0.3}
H \left(t,x,\Upsilon_{(s,y)}\omega \right)=H(t+s,x+y,\omega).
\eeq
For any $t\in\bbR$, we  let $\calF_t^\pm(H)$ 
be the $\sigma$-algebra generated by the family of random variables 
\[
\left\{ H(s,x,\cdot)\,\big|\, \pm(s-t)\geq 0 \text{ and } x\in\bbR^d\right\}.
\]
We also define for each $s\ge 0$,
\[
\phi_H(s):=\sup\left\{\left| \bbP[F|E] -\bbP[F]\right| \,\big|\,\, t\in\bbR \,\,\&\,\, (E,F)\in \calF_t^-(H) \times \calF_{t+s}^+(H) \,\,\&\,\, \bbP[E]>0 \right\}.
\]
\end{definition}

\smallskip
So $\phi_H$ is clearly non-increasing, and it vanishes at some $s\ge 0$ precisely when $\calF^-_t(H)$ and $\calF^+_{t+s}(H)$  are $\bbP$-independent for each $t\in\bbR$ (in that case $H$ has a {\it finite temporal range of dependence}).
The mixing hypothesis mentioned above will in our case be the assumption that $\lim_{s\to\infty} \phi_H(s)=0$ (possibly at some rate)  for the appropriate function $H$.
That is, it will involve mixing in time but not necessarily in space.

Long-time propagation of solutions to  \eqref{1.0} is well known to be ballistic, with solutions converging locally uniformly to 1.  One should therefore expect homogenization to take the following form.   First, solutions starting from compactly supported initial data should approximate the characteristic function of $t\calS$ for some open bounded convex set $\calS\ni 0$ (called {\it Wulff shape}) as $t\to\infty$.  This means that, as $t\to\infty$ the $\theta$-level set of the solution should, after scaling by $\frac 1t$ in space, converge in Hausdorff distance to $\partial \calS$ for each $\theta\in(0,1)$.  And, of course, this should hold for a large set of $\omega\in\Omega$ in the probabilistic sense, with the Wulff shape being deterministic (i.e., $\omega$-independent).

Second, \eqref{1.1} should exhibit  a homogenized large-scale dynamic in the ballistic scaling
\begin{equation}\label{0.5'}
u^{\eps}(t,x, \omega):=u\left( \eps^{-1}  t, \eps^{-1}  x, \omega\right),
\end{equation}
with $\eps>0$ small.  This of course turns \eqref{1.0} into its large-space-time-scale version
\begin{equation}\label{3.0}
u^{\eps}_{t}= \calL_\omega^\eps u^{\eps}+ \eps^{-1} f\left(\eps^{-1} t,\eps^{-1} x, u^{\eps}, \omega\right),
\end{equation}
where
\[
\calL_\omega^\eps u^\eps(t,x):= \eps \sum_{i,j=1}^d A_{ij} \left(\eps^{-1} t,\eps^{-1} x,\omega \right) u^\eps_{x_ix_j}(t,x)  + \sum_{i=1}^d b_i \left(\eps^{-1} t,\eps^{-1} x,\omega \right) u^\eps_{x_i} (t,x).
\]
Then one hopes that,  again for a large set of $\omega\in\Omega$, solutions to \eqref{3.0} with some $\eps$-independent initial datum $u_0$ converge as $\eps\to 0$  to a function $\bar u$ that solves some homogeneous PDE with the same initial value (the term ``homogenization'' usually refers to this type of result).  

Such stochastic homogenization results were obtained previously in several works for time-independent $(A,b,f)$  in one spatial dimension, where the geometry of the  level sets of solutions is trivial (they are typically two points ballistically  traveling to $\pm\infty$).  The interested reader can consult, for instance,  papers \cite{BarSou, BerNad, GF, MajSou, NolRyz, VakVol, ZlaGenfronts, ZlaBist} and references therein  (yet others involve spatially periodic rather than random $(A,b,f)$), which study KPP reactions as well as ignition and bistable reactions (for which $f (\cdot,\cdot,u,\cdot)$ vanishes or is negative when $u>0$ is close to 0).  

Progress in the multi-dimensional (and still time-independent) case $d\ge 2$ has been much more limited, due to the geometry of the level sets of solutions substantially complicating the analysis.   Stochastic homogenization results for stationary ergodic ignition reactions and $(A,b)=(\Delta,0)$  in dimensions $d\le 3$ were recently obtained by the second author and Lin  \cite{LinZla} as well as by both authors  \cite{ ZhaZla, ZhaZla3} (homogenization results in spatially periodic multidimensional media appear in, e.g.,  \cite{AlfGil,BarSou, CLM, GF, LinZla, MajSou}), but the only such results for KPP reactions that we are aware of are Theorem 9.3 in \cite{LioSou} by Lions and Souganidis, and 
Theorem 1.4 in the companion paper \cite{ZlaKPPspread} by the second author (the latter even holds in the time-periodic $(A,b,f_u(\cdot,\cdot,0,\cdot))$ case, which is closely related to the time-independent setting).  However, we note that Theorem 9.3 in \cite{LioSou}  is stated without a proof, and the authors  only indicated that methods developed by them and in other works can be used to obtain one.  Moreover, we know of no other prior homogenization results  even in the simpler case of  time-independent and spatially periodic KPP reactions (although existence of Wulff shapes and front speeds in the periodic case goes back to work of G\" artner and Freidlin \cite{GF}).

In light of the  above discussion, our main result for KPP reactions (Theorem \ref{T.1.0} below) appears to be the first one in the general time-dependent setting for any reaction and in any dimension.  
In fact, homogenization results in time-dependent environments seem to be rather sparse even in the much more studied and developed setting of  Hamilton-Jacobi equations (see below).
We note that the proof of Theorem \ref{T.1.0} uses two new ingredients, a  {\it non-autonomous} version of the classical Kingman's subadditive ergodic  theorem \cite{kingman}  (Theorem \ref{T.1.1} below) and the principle of {\it virtual linearity} for  \eqref{1.0}, from the companion papers \cite{ZhaZla4, ZlaKPPspread}.

In addition, together with Theorem 1.4 in  \cite{ZlaKPPspread}, Theorem \ref{T.1.0} appears to be the first multi-dimensional stochastic homogenization result for \eqref{1.0} that provides an explicit formula for the solution to the homogenized dynamic (except in the special case of isotropic ignition reactions, see below).  The results in \cite{LinZla, ZhaZla, ZhaZla3} for ignition reactions, as well as Theorem 9.3 in \cite{LioSou} for KPP reactions show that in the relevant settings, solutions to \eqref{3.0} with common initial datum $ u_0$ converge as $\eps\to 0$ to a discontinuous viscosity solution to the homogeneous  Hamilton-Jacobi equation
\beq\lb{0.7}
\bar u_t=c^* \left(-\nabla\bar u |\nabla \bar u|^{-1} \right) |\nabla \bar u|
\eeq
that only takes values in $\{0,1\}$ for all $t>0$, where $c^*(e)$ is some $(A,b,f)$-dependent deterministic {\it front speed} in direction $e\in\bbS^{d-1}$ (see \cite{LinZla,ZhaZla, ZlaKPPspread} for its definition).  This yields an implicit formula for  the homogenized solutions.  However, here and in \cite{ZlaKPPspread} we show that for  KPP reactions (including in the  time-dependent case), one in fact has the explicit formula
\beq\lb{0.6}
\bar u := \chi_{G+t\calS},
\eeq
where (essentially) $G:=\supp \, u_0$ and $\calS$ is the Wulff shape for $(A,b,f)$ (this then also implies that $c^*(e)$ exists for each $e\in\bbS^{d-1}$ and  $c^*(e)=\sup_{y\in \calS} \,y\cdot e$).  Moreover, we show that the dependence of $\calS$ on $f$ is only through $f_u(\cdot,\cdot,0,\cdot)$ in the KPP reaction case.

The reason for $\bar u$ only taking values in $\{0,1\}$ is the hair-trigger effect, discussed in Section~\ref{s.3} below, which shows that solutions to \eqref{1.0} transition from values arbitrarily close to 0 to those arbitrarily close to 1 in $\eps$-independent time.  This then becomes an instantaneous transition from value 0 to 1 in the $\eps\to 0$ limit for \eqref{3.0}.  However, our proofs  show that this transition also becomes sharp in space in this limit, which is not surprising but also not an obvious corollary.  Of course, this means that  for solutions to \eqref{1.0}, spatial transition from values close to 0 to those close to 1 happens on distances of size $o(t)$.  This shows that in the setting of \eqref{1.0}, it makes most sense to consider initial data that are also characteristic functions of sets in $\bbR^d$, but our main results in fact hold for more general initial data (see \eqref{3.0'} below).

This contrasts with the case of ignition reactions in dimensions $d\le 3$, where the second author proved that the above spatial transition for \eqref{1.0} occurs on distances of size $O(1)$ \cite{ZlaInhomog} (calling this the {\it bounded width} property of solutions).  For this it is crucial that the hair trigger effect is not present for ignition reactions, and the argument was based on the solution dynamic being {\it pushed} for ignition reactions when $d\le 3$ (which may fail when $d\ge 4$ \cite{ZlaInhomog}).  On the other hand, for KPP reactions it is  {\it pulled} due to the crucial hypothesis $f(t,x,u,\omega)\le f_u(t,x,0,\omega)u $, which guarantees that the dynamic depends on $f$ only through $f_u(\cdot,\cdot,0,\cdot)$ \cite{ZlaKPPspread}.  See \cite{ZlaInhomog, ZlaKPPspread} for details on these concepts and further discussion.

We note that the explicit formula \eqref{0.6} also holds for time-independent stationary ergodic reactions if \eqref{1.0} has a Wulff shape $\calS$ and this $\calS$ has no corners \cite{LinZla} (i.e., it has a unique unit outer normal vector at each $x\in\partial\calS$).  However, the latter hypothesis has previously only been verified for isotropic ignition reactions in dimensions $d\le 3$ \cite{LinZla}, when $\calS$ is a ball (this clearly also holds in the settings of Theorem \ref{T.1.0} below and Theorem 1.4 in  \cite{ZlaKPPspread}), and it is known that it can fail even for non-isotropic periodic ignition reactions in two dimensions.  In fact, an example constructed by Caffarelli, Lee, and Mellet in  \cite{CLM} was  used in \cite{LinZla} to show that not only $\calS$ can have corners in this setting, but \eqref{0.6} can also fail for non-KPP reactions.

Let us now state our main result for \eqref{1.0}, in which we can also accommodate $\eps$-dependent shifts $y_\eps$ of the initial value and perturbations that decay in an appropriate sense as $\eps\to 0$.  To simplify the relevant notation, let $B_r:=B_r(0)\subseteq\bbR^d$ for $r>0$ and $B_0:=\{0\}$, then let $B_r(G):=G+B_r$ and $G^0_r:=G\backslash\overline{B_r(\partial G)}$ for $G\subseteq\bbR^d$ and $r\ge 0$ (so $G^0_0$ is the interior of $G$).

\begin{theorem} \lb{T.1.0}
Let $f$ be a KPP reaction and let $\calL_\omega$ be from \eqref{1.0'}, where
 $A=(A_{ij})$ a bounded symmetric matrix with $A\ge \lambda I$ for some $\lambda>0$, and the vector $b=(b_1,\dots,b_d)$ satisfies 
 \beq \lb{0.2}
 \|b\|_{L^\infty}^2< 4 \lambda \inf_{(t,x,\omega)\in\bbR^{d+1} \times\Omega} f_u(t,x,0,\omega).
 \eeq
  Also assume that $H:=(A,b,f_u(\cdot,\cdot,0,\cdot))$ is space-time stationary.
 
 (i)  If   $\lim_{s\to\infty}s^\alpha \phi_H(s)=0$ for some $\alpha>0$, then there is a convex bounded open set $\calS\subseteq\bbR^d$ containing 0 (called Wulff shape), which depends only on $H$, and  the following holds for almost all $\omega\in\Omega$. If $G\subseteq \bbR^d$ is open, $\theta\in (0,1)$, $\Lambda<\infty$, and $u^\eps(\cdot,\cdot,\omega)$ solves \eqref{3.0}
with
\beq\lb{3.0'}
\theta\chi_{(G+y_\eps)^0_{\rho(\eps)}}\le u^\eps(0,\cdot,\omega)\leq \chi_{B_{\rho(\eps)}(G+y_\eps)}
\eeq
for each $\eps>0$, with some $y_\eps\in B_\Lambda$ and $\lim_{\eps\to0}\rho(\eps)=0$  (when $y_\eps=0$ and $\rho(\eps)= 0$, this becomes just $\theta\chi_{G} \le u^\eps(0,\cdot,\omega)\le \chi_{G}$), then
\beq \lb{0.1}
\lim_{\eps\to0}u^\eps(t,x+y_\eps,\omega)=\chi_{G^\calS}(t,x)
\eeq
locally uniformly on $([0,\infty)\times\bbR^d)\backslash \partial G^\calS$, where $G^\calS:=\{(t,x)\in\bbR^+\times\bbR^d\,|\,x\in G+t\calS\}$.

(ii)  If $\lim_{s\to\infty} \phi_H(s)=0$, then (i) holds with $\Lambda=\infty$ and with \eqref{0.1} replaced by 
\beq\lb{3.6}
\lim_{\eps\to0} \bbP\left[ (G+(1-\delta)t \calS)\cap B_{\delta^{-1}}\subseteq \Gamma_{\theta'}^{\eps}(t,\cdot)\cap B_{\delta^{-1}}\subseteq (G+(1+\delta)t \calS)\cap B_{\delta^{-1}}\,\, \forall t\in [\delta,\delta^{-1}]\right]=1
\eeq
for any $\delta,\theta'\in(0,1)$, where $\Gamma_{\theta'}^{\eps}(t,\omega):=\{x\in\bbR^d\,|\, u^\eps(t,x+y_\eps,\omega)\geq \theta' \}$.
\end{theorem}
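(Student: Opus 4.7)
My plan is to reduce the nonlinear problem to its linearization via the KPP bound, extract deterministic direction-dependent exponential growth rates using the non-autonomous subadditive theorem, and finally convert these rates into the Wulff shape and the two spreading inequalities using the hair-trigger effect.

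First I would linearize. Since $f(t,x,u,\omega)\le f_u(t,x,0,\omega)u$, the maximum principle gives $u\le v$ where $v$ solves $v_t=\calL_\omega v+f_u(t,x,0,\omega) v$ from the same initial datum, which handles the outer spreading bound directly. For the inner bound I would invoke the principle of virtual linearity from the companion papers \cite{ZhaZla4, ZlaKPPspread}: regions in which $v$ stays bounded below by a small positive quantity asymptotically coincide with regions in which $u$ eventually exceeds any fixed $\tht'\in(0,1)$, with the hair-trigger effect of Section \ref{s.3} amplifying ``small positive'' to ``close to $1$'' in uniformly bounded time. Both \eqref{0.1} and \eqref{3.6} therefore reduce to controlling the exponential growth of $v$ along rays.

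Second, for each $e\in\bbS^{d-1}$ I would attach to intervals $[s,t]$ a scalar $\mu_{s,t}(e,\omega)$ -- morally minus the log of an $L^\infty$-type norm of the linear propagator evaluated near $(t-s)e$ -- designed so that Duhamel/Chapman--Kolmogorov concatenation yields
\[
\mu_{s,u}(e,\omega)\le \mu_{s,t}(e,\omega)+\mu_{t,u}(e,\omega)+C
\]
with a uniform error $C$, and so that space-time stationarity of $H$ gives
\[
\mu_{s,t}(e,\omega)=\mu_{0,t-s}(e,\Upsilon_{(s,se)}\omega).
\]
This is precisely the non-autonomous subadditive framework of Theorem \ref{T.1.1}, whose hypotheses are verified from $\phi_H(s)\to 0$. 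Under the polynomial rate in (i) it yields almost-sure convergence of $\mu_{0,t}/t$ to a deterministic $-\lambda_*(e)$; under mere decay in (ii) it yields convergence in probability. A Legendre-type optimization over an exponential-tilt parameter turns $\lambda_*$ into the front speed $c^*(e)$, whose uniform positivity follows from \eqref{0.2} together with $\inf f_u(\cdot,\cdot,0,\cdot)>0$. I would then set $\calS:=\{y\in\bbR^d\mid y\cdot e\le c^*(e)\ \forall e\in\bbS^{d-1}\}$, which is convex, bounded, and contains a neighborhood of $0$.

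Third I would close both bounds. If $(t_0,x_0)\notin\overline{G^\calS}$, some $e$ separates $x_0$ from $G+t_0\calS$ by $\eta>0$, and exponential decay of $v$ in the corresponding tilted half-space forces $u^\eps(t_0,x_0,\omega)\to 0$. If $(t_0,x_0)\in (G^\calS)^\circ$, a straight line from some $y\in G+y_\eps$ to $x_0$ has slope strictly inside $\calS$, so $v^\eps(t_0,x_0,\omega)\gtrsim 1$, after which virtual linearity and the hair-trigger push $u^\eps(t_0,x_0,\omega)$ above $\tht'$. The perturbations $\rho(\eps)$ and $y_\eps\in B_\Lambda$ are absorbed via continuity in initial data and spatial stationarity of $H$. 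For (i) a Borel--Cantelli argument over a countable dense family of $(e,G,\tht,y_\eps,t_0,x_0)$, enabled by the summable polynomial rate, together with monotonicity/continuity in the parameters, upgrades to locally uniform a.s. convergence as in \eqref{0.1}; for (ii) a $\del$-net in $t\in[\del,\del^{-1}]$ converts the in-probability rate estimates into the uniform statement \eqref{3.6}. The main obstacle is constructing the subadditive quantity so that its shift structure matches the \emph{joint} space-time stationarity -- the natural concatenation at an intermediate time $t$ produces the coupled shift $\Upsilon_{(t,te)}$, which is exactly why off-the-shelf Kingman does not suffice and Theorem \ref{T.1.1} is needed -- and uniformly controlling the error $C$ in $\eps$ and $e$ so that the resulting $c^*$ is continuous on $\bbS^{d-1}$.
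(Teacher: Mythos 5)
Your proposal takes a genuinely different route from the paper, but there is a fundamental mismatch between the subadditive quantity you propose and the structure of Theorem~\ref{T.1.1}, and the misdiagnosis of why that theorem is needed is a real gap.

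The paper does not linearize and then Legendre-transform a Lyapunov exponent. Instead it defines \emph{travel times}
\[
\tau^{t_0}(x_0,x,\omega):=\inf\big\{t\ge 0\,\big|\, B_1(x)\subseteq\Gamma_{1/2}(t,\omega;t_0,x_0)\big\},
\]
the first time the $\tfrac12$-super-level set of the solution started from $\tfrac12\chi_{B_1(x_0)}$ at time $t_0$ covers $B_1(x)$. The comparison principle gives the \emph{exact} concatenation
\[
\tau^{t_0}(x_0,x,\omega)\le \tau^{t_0}(x_0,z,\omega)+\tau^{\,t_0+\tau^{t_0}(x_0,z,\omega)}(z,x,\omega),
\]
and the crucial point is that the initial time of the second leg is the \emph{random} arrival time $t_0+\tau^{t_0}(x_0,z,\omega)$. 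This random-time shift is precisely what hypothesis (1) of Theorem~\ref{T.1.1}, namely $X_{m,n}^t\le X_{m,k}^t+X_{k,n}^{t+X_{m,k}^t}$, is built to handle, and it is what makes off-the-shelf Kingman fail. The Wulff speed is then read off directly as $w(e)=\bar\tau(e)^{-1}$; no tilt parameter and no Legendre transform appear, and \eqref{2.6} gives the required continuity in $e$ essentially for free. Theorem~\ref{T.2.5} then upgrades the ray-wise convergence to a shift-uniform statement via Egorov's theorem and Wiener's multiparameter ergodic theorem, not via Borel--Cantelli.

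Your proposed quantity $\mu_{s,t}(e,\omega)$ has a \emph{deterministic} concatenation $\mu_{s,u}\le\mu_{s,t}+\mu_{t,u}+C$ and stationarity under the fixed joint shift $\Upsilon_{(s,se)}$. That is not the regime of Theorem~\ref{T.1.1}: a one-parameter deterministic shift whose time component is mixing (which follows from $\phi_H(s)\to0$) is exactly what classical almost-subadditive ergodic theorems cover. So the stated ``main obstacle'' -- that the coupled shift $\Upsilon_{(t,te)}$ is why Kingman fails -- is incorrect, and your plan either does not need Theorem~\ref{T.1.1} or does not match it. Beyond this, the construction of $\mu$ is only sketched: controlling the linear Green's function pointwise near a ray with a uniform error $C$ in a random, time-dependent medium, and then performing the Legendre optimization over tilts with the uniformity needed to make $c^*$ well defined and continuous, is a substantial amount of unaddressed work that the travel-time approach avoids entirely. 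The rest of your plan -- maximum-principle domination by the linearization for the outer bound, hair-trigger and virtual linearity from \cite{ZlaKPPspread} for the inner bound, and the reduction of general $f$ to $f'(t,x,u)=f_u(t,x,0)\min\{u,1-u\}$ so that only $H=(A,b,f_u(\cdot,\cdot,0,\cdot))$ matters -- is in the right spirit and aligns with the paper, but the core subadditive engine as you have set it up does not deliver what you claim from it.
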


{\it Remarks.} 
1.  The hypothesis on $\phi_H$ is of course satisfied in both (i) and (ii) when $H$ has a finite temporal range of dependence.
\smallskip

2.  Since the homogenized dynamic only depends on $f$ via $f_u(\cdot,\cdot,0,\cdot)$, the full reaction $f$ need not be space-time-stationary or have the required temporal dependence properties.
\smallskip


3. It is shown in \cite{ZlaKPPspread} that the bound \eqref{0.2} is necessary (and sharp) for solutions to spread with positive  speeds in all directions (i.e., for $0\in\calS$).
\smallskip

4.  Allowing for $y_\eps\neq 0$ makes (i) more general, but this is not the case for (ii) due to space stationarity of $H$.
\smallskip

5.  In the course of the proof we also show in Theorems \ref{T.2.5} and \ref{T.2.6} below that $\calS$ is the Wulff shape for \eqref{1.1} in the sense of propagation  from compactly supported initial data.
\smallskip

6.  In (ii) we also have that $\limsup_{\eps\to0}u^\eps(t,x+y_\eps,\omega)\le \chi_{G^\calS}(t,x)$ locally uniformly on $([0,\infty)\times\bbR^d)\backslash \partial G^\calS$ (see the remark at the end of Section \ref{s.3}).
\smallskip


%

%
%

\medskip

Let us now turn to the  G-equation \eqref{G1.1}. The scaling \eqref{0.5'} transforms it into
\beq\lb{4.0}
u^\eps_t+v(\eps^{-1}t,\eps^{-1}x,\omega)\cdot \nabla u^\eps=c(\eps^{-1}t,\eps^{-1}x,\omega)|\nabla u^\eps|,
\eeq
and the goal is again to show that the dynamic for this PDE converges in an appropriate sense to that for some deterministic homogeneous equation  as $\eps\to 0$. 

The G-equation is a (first-order) Hamilton-Jacobi equation and there is a vast literature on periodic and stochastic homogenization for general Hamilton-Jacobi equations 
\[
u_t=H(t,x,\nabla u,\omega),
\]
as well as their second-order (viscous) analogs.
We will not attempt to review it here, and will only focus  on homogenization results involving time-dependent Hamiltonians. 
Kosygina and Varadhan proved homogenization for space-time stationary ergodic super-linear (in $p$) Hamiltonians in the presence of diffusion represented by the Laplacian  \cite{KosVar}, Schwab  addressed the same case but without diffusion \cite{Sch}, and
Jing, Souganidis, and Tran   treated the cases of space-time stationary ergodic super-quadratic Hamiltonians with possibly degenerate diffusions \cite{HJ_JST}.  The last three authors also considered \eqref{G1.1} with $v\equiv 0$ and $c$ that is either periodic in time and stationary-ergodic in space or vice versa \cite{JST2,jing}.
All these papers considered Hamiltonians that are convex and coercive in $\nabla u$  (the latter means that $\lim_{p\to \infty}H(t,x,p,\omega)=\infty$ for all $(t,x,\omega)\in\bbR^{d+1}\times\Omega$), which is a frequent hypothesis in the theory, even in the time-independent case.

The Hamiltonian $H(t,x,p,\omega):=c(t,x,\omega)|p|-v(t,x,\omega)\cdot p$ from \eqref{G1.1} is convex when $c\ge 0$, but it is only coercive when $|v|<c$.
Hence none of the above results are applicable to the G-equation when this fails, and we in fact know of only two prior homogenization results in the non-coercive time-dependent case. Cardaliaguet, Nolen, and Souganidis obtained homogenization for \eqref{G1.1} with $c\equiv 1$ and space-time periodic $v$ with a not-too-large divergence  \cite{CNS}  (independently, Xin and Yu addressed the divergence-free time-independent space-periodic case at the same time \cite{XinYu}; see also the work of Cardaliaguet and Souganidis  \cite{CarSou} for the general spatially stationary ergodic case).  More recently, Burago, Ivanov, and Novikov proved it with $c\equiv 1$ and space-time stationary divergence-free $v$ that is not too large in average over very large balls (specifically, \eqref{Gc4} below holds with $c\equiv 1$) and has a finite temporal range of dependence \cite{BIN}.  Hence this was the first (and prior to our results the only) stochastic homogenization result in the non-coercive time-dependent setting.


Our approach to homogenization for  KPP reaction-advection-diffusion equations, via the non-autonomous subadditive theorem from the next section, turns out to easily extend to the setting of $G$-equations with general $(c,v)$ that have infinite temporal ranges of dependence, provided their temporal correlations decay in an appropriate sense.
%
Our main result for \eqref{G1.1} is Theorem~\ref{T.4.0} below, which we discuss next.
We note that besides the ability to accommodate some environments with infinite temporal ranges of dependence, another advantage of our method is that it applies to second-order equations, including \eqref{1.0} (the method in \cite{BIN} does not seem to be fully extendable to this setting, due to the need for a control representation formula for solutions, such as \eqref{5.5} below).  It will be used to study homogenization for other (viscous) Hamilton-Jacobi PDE elsewhere \cite{ZhaZla6}.

In the setting of G-equations, we again have the concept of a Wulff shape, which is now the asymptotic shape of  the reachable sets in the sense of the following definition.

\begin{definition}
We say that $(t_1,x_1)\in\bbR^{d+1}$ is  $\omega${\it -reachable}  from $(t_0,x_0)\in(-\infty,t_1]\times\bbR^{d}$ if there is an absolutely continuous path $\gamma:[t_0,t_1]\to\bbR^d$ such that $\gamma(t_j)=x_j$ ($j=0,1$) and
\[
\left|\gamma'(t)-v(t,\gamma(t),\omega)\right|\leq c(t,\gamma(t),\omega)
\]
for almost all $t\in [t_0,t_1]$. For any $t\geq 0$, we let
\beq\lb{G1.3}
\Gamma(t,\omega;t_0,x_0):=\left\{x\in\bbR^d\,\big|\, (t_0+t,x)\text{ is $\omega$-reachable from }(t_0,x_0)\right\}
\eeq
be the $\omega${\it -reachable set} from $(t_0,x_0)$ at time $t$, and denote $\Gamma(t,\omega):=\Gamma(t,\omega;0,0)$.
\end{definition}

These sets allow one to explicitly solve \eqref{G1.1} via a well known control representation formula  (see, e.g., Theorem 7.2 in \cite{control}), under reasonable hypotheses on $c$ and $v$, so that after scaling we obtain
\beq\lb{5.5}
u^\eps(t,x,\omega)=\sup_{x\in \eps\Gamma(\eps^{-1}t,\omega;0,\eps^{-1}y)}  u^\eps(0,y,\omega) 
\eeq
for solutions to \eqref{4.0}.
If there is $\calS\subseteq\bbR^d$ such that $\Gamma(t,\omega)$ approaches $t\calS$ as $t\to\infty$, for a large set of $\omega$ in the probabilistic sense (space-time stationarity then shows that the same holds for $\Gamma(t,\omega;t_0,x_0)$ for any $(t_0,x_0)\in\bbR^{d+1}$), then $\calS$ is the Wulff shape for \eqref{G1.1}.  In this case it follows from \eqref{5.5} that if the solutions $u^\eps$ share the same initial datum $u_0$, then they converge in an appropriate sense to the function
\beq\lb{5.6}
\bar{u}(t,x):=\sup_{x\in y+t\calS} u_0(y) = \sup_{y\in x-t\calS} u_0(y),
\eeq
which again also solves \eqref{0.7} with $c^*(e)=\sup_{y\in \calS} \,y\cdot e$.



We note that unlike for \eqref{1.0}, here the transition time from one value of $u$ to another should be roughly proportional to the inverse of the spatial gradient of the solution, and therefore can increase as $O(\frac 1\eps)$ if the solution gradient is $O(\eps)$.  It then makes perfect sense to consider initial data for \eqref{4.0} equal to or approximating some continuous function as $\eps\to 0$, which is what we will therefore do  in the following analog of Theorem \ref{T.1.0} for the G-equation (nevertheless, our arguments easily extend to discontinuous initial data).

%

\begin{theorem} \lb{T.4.0}
 Let $(c,v)$ be bounded, uniformly continuous in $t$, and (uniformly) Lipschitz in $x$,
with $v$ divergence-free (i.e., $\nabla_x \cdot v(t,\cdot,\omega)=0$ holds a.e., for all $(t,\omega)\in\bbR\times\Omega$) and
\beq\lb{Gc4}
\inf_{L>0}\sup_{(t,x,\omega)\in\bbR^{d+1}\times\Omega}\left\|\frac{1}{L^d}\int_{[0,L]^d}v(t,x+y,\omega)dy\right\|_{L^\infty} 
<\inf_{(t,x,\omega)\in\bbR^{d+1}\times\Omega}c(t,x,\omega).
\eeq
Also assume that $H:=(c,v)$ is space-time stationary.

 (i)  If   $\lim_{s\to\infty}s^\alpha \phi_H(s)=0$ for some $\alpha>0$, then there is a convex bounded open set $\calS\subseteq\bbR^d$ containing 0 (called Wulff shape)
such that  the following holds for almost all $\omega\in\Omega$. 
If $u_0$ and $u^\eps(0,\cdot,\omega)$ for each $(\eps,\omega)\in (0,1)\times\Omega$ are uniformly continuous on $\bbR^d$,  $\Lambda<\infty$, and $u^\eps(\cdot,\cdot,\omega)$ solves \eqref{4.0} in the viscosity sense with
\beq\lb{5.1'}
\sup_{\omega\in\Omega} \|u^\eps(0,\cdot+y_\eps,\omega)- u_0\|_{L^\infty} \leq \rho(\eps)
\eeq
for some $y_\eps\in B_\Lambda$ and $\lim_{\eps\to0}\rho(\eps)=0$  (when $y_\eps=0$ and $\rho(\eps)= 0$, this becomes just $u^\eps(0,\cdot,\omega)=u_0$), then
\begin{align}\lb{5.1}
\lim_{\eps\to 0} u^\eps(t,x+y_\eps,\omega)=\sup_{y\in x-t\calS} u_0(y) 
\end{align}
locally uniformly on $[0,\infty)\times\bbR^d$.

(ii)  If $\lim_{s\to\infty} \phi_H(s)=0$, then (i) holds with $\Lambda=\infty$ and with \eqref{5.1} replaced by 
\begin{align*}
\lim_{\eps\to0}\bbP\left[ \,\left|u^\eps(t,x+y_\eps,\omega)-\sup_{y\in x-t\calS} u_0(y) \right| \leq \delta\,\,\, \forall (t,x)\in [0,\delta^{-1})\times B_{\delta^{-1}}\right]=1
\end{align*}
for any $\delta>0$.
\end{theorem}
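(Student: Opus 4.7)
The proof hinges on the control representation \eqref{5.5}, which reduces everything to understanding the long-time shape of the reachable sets $\Gamma(t,\omega;t_0,x_0)$. My plan is to show that there is a deterministic convex bounded open set $\calS\ni 0$ such that $t^{-1}\Gamma(t,\omega)$ converges to $\calS$ in Hausdorff distance as $t\to\infty$, almost surely under the hypothesis of (i) and in probability under that of (ii). Once this is in hand, inserting the rescaled sets into \eqref{5.5} together with the uniform continuity of $u_0$ and the bound \eqref{5.1'} yields \eqref{5.1} in (i), while in (ii) spatial stationarity lets us transport the convergence across large spatial regions and produce the uniform-in-$B_{\delta^{-1}}$ probabilistic statement.

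To construct $\calS$ I would apply the non-autonomous subadditive theorem \ref{T.1.1} to the directional support functions of the reachable sets. For $e\in\bbS^{d-1}$, $t_0\in\bbR$, and $t\ge 0$, set
\[
h_e(t_0,t_0+t,\omega):=\sup_{x\in\Gamma(t,\omega;t_0,0)} x\cdot e.
\]
Concatenating an admissible path reaching a maximizer $x^*$ at time $t_0+s$ with an admissible path thereafter, and translating the latter via spatial stationarity of $(c,v)$, gives the superadditive relation
\[
h_e(t_0,t_0+s+t,\omega)\ge h_e(t_0,t_0+s,\omega)+h_e\bigl(t_0+s,t_0+s+t,\Upsilon_{(0,x^*)}\omega\bigr),
\]
with the uniform bound $|h_e(t_0,t_0+t,\omega)|\le (\|c\|_{L^\infty}+\|v\|_{L^\infty})t$ from the admissibility constraint. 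Applying Theorem \ref{T.1.1} to $-h_e$ yields a deterministic direction-dependent speed $c^*(e):=\lim_{t\to\infty}t^{-1}h_e(0,t,\omega)$, and the Wulff shape is
\[
\calS:=\bigl\{y\in\bbR^d:y\cdot e<c^*(e)\ \text{for all}\ e\in\bbS^{d-1}\bigr\},
\]
which is convex and bounded by construction.

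The crucial qualitative property $0\in\calS$, i.e.\ $c^*(e)>0$ for every $e$, is the main obstacle; this is precisely where \eqref{Gc4} and the divergence-free hypothesis are used. I would adapt the trajectory construction of \cite{BIN}: integrating the incompressible $v$ over large cubes produces admissible paths whose effective drift is controlled by the cube-averaged velocity, and the strict inequality in \eqref{Gc4} allows one to beat this drift by the minimal flame speed, producing for every $e$ an admissible path from $(0,0)$ reaching $(t,(c_e+o(1))te)$ for some $c_e>0$. Extending the $c\equiv 1$ argument of \cite{BIN} to general bounded $c$ bounded away from zero requires tracking how the constraint $|\gamma'-v|\le c$ interacts with the cube-averaged bounds, but should be a controlled perturbation of the argument there.

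Hausdorff convergence $t^{-1}\Gamma(t,\omega)\to\calS$ then follows from the directional limits $t^{-1}h_e(0,t,\omega)\to c^*(e)$ by taking a countable dense $\{e_k\}\subset\bbS^{d-1}$ on a common full-measure event, and using that $\calS$ is convex while $c^*$ is Lipschitz in $e$ (as a limit of uniformly Lipschitz support functions) to interpolate across all directions. Plugging into \eqref{5.5} and combining with \eqref{5.1'} and the uniform continuity of $u_0$ gives
\[
u^\eps(t,x+y_\eps,\omega)=\sup_{y:\,x\in y+\eps\Gamma(\eps^{-1}t,\omega;0,\eps^{-1}(y-y_\eps))} u^\eps(0,y,\omega)\ \xrightarrow{\eps\to 0}\ \sup_{y\in x-t\calS} u_0(y)
\]
locally uniformly, yielding (i). For (ii), the absence of a polynomial rate on $\phi_H$ gives only $L^1$ (hence in-probability) convergence of $t^{-1}h_e(0,t,\omega)$ from \ref{T.1.1}; combining this for a countable collection of directions and a countable collection of spatial shifts --- with space-stationarity used to make the estimate uniform in the starting position across $B_{\delta^{-1}}$ --- yields the required probabilistic bound on the full spatial region.
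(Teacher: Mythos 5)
Your overall plan---reduce to reachable sets via the control representation, apply the non-autonomous subadditive theorem, and invoke \cite{BIN} for the lower-bound $0\in\calS$---matches the paper's strategy in broad outline. The paper reduces to exactly the same ingredients: the representation \eqref{5.5}, the lower bound \eqref{2.12'} which it obtains directly from Corollary 1.3 of \cite{BIN} applied with $c$ replaced by $\inf c$ and then the comparison principle (simpler than your ``controlled perturbation'' route, though that should also work), and Theorem \ref{T.1.1}.

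However, there is a genuine structural gap in the way you propose to use Theorem \ref{T.1.1}. That theorem requires the subadditive inequality
\[
X_{m,n}^t \leq X_{m,k}^t + X_{k,n}^{t + X_{m,k}^t},
\]
in which the \emph{time superscript} of the second term shifts by the (scalar) \emph{value} of the first term. This is designed precisely for quantities where the natural shift is temporal. The paper therefore works with the \emph{arrival time} $\tau^{t_0}(x_0,x,\omega)$ and sets $X_{m,n}^t := \tau^t(me,ne,\cdot)$, so that \eqref{2.1} matches (1) exactly: the arrival time itself is the random temporal shift. Your directional support functions $h_e$ instead satisfy a superadditive relation in which the second term is shifted by the \emph{random spatial} location $x^*$ of the maximizer,
\[
h_e(t_0,t_0+s+t,\omega)\ge h_e(t_0,t_0+s,\omega)+h_e\bigl(t_0+s,t_0+s+t,\Upsilon_{(0,x^*)}\omega\bigr).
\]
This does not fit the form of condition (1): there is no scalar temporal superscript being shifted by the value of $X_{m,k}^t$, and $x^*$ is not of the right type to plug into the theorem. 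In a spatially ergodic setting one could absorb the spatial shift, but the whole point of Theorem \ref{T.1.1} is to handle environments with \emph{only temporal} mixing, so one cannot fall back on a spatial ergodic theorem. The choice of travel times rather than support functions is not cosmetic; it is what makes the time-dependent subadditive theorem applicable.

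A secondary issue: from directional limits of $t^{-1}h_e$ alone you only get the \emph{upper} half of the Hausdorff convergence, $t^{-1}\Gamma(t)\subseteq(1+\delta)\calS$. Your $\calS$ is convex by fiat (an intersection of half-spaces), but the \emph{lower} inclusion $t^{-1}\Gamma(t)\supseteq(1-\delta)\calS$ requires knowing that the radial limit shape $\{se: s<w(e)\}$ is itself convex and fills out the intersection of half-spaces. The paper proves this convexity in Theorem \ref{T.2.6} via the travel-time subadditivity \eqref{2.8} and the angle bisector theorem; some argument of that kind is unavoidable and is missing from your sketch.
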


{\it Remarks.}
%
1.
Similarly to \eqref{0.2} in Theorem \ref{T.1.0}, hypotheses \eqref{Gc4} and  $\nabla_x\cdot v\equiv 0$ guarantee positive spreading speed of reachable sets in all directions \cite{BIN}.
\smallskip

2. Again, in (ii) we also have that $\limsup_{\eps\to0}u^\eps(t,x+y_\eps,\omega)\le \sup_{y\in x-t\calS} u_0(y)$ locally uniformly on $[0,\infty)\times\bbR^d$ (this is analogous to the proof of Remark 6 after Theorem \ref{T.1.0}).
\smallskip

\smallskip

{\bf Organization of the Paper and Acknowledgements.}
In Section \ref{s.2} we state a subadditive theorem from \cite{ZhaZla4}.  We then prove the two parts of Theorem \ref{T.1.0} in Sections \ref{s.3} and \ref{s.4}, and show how to extend these arguments to the case of Theorem \ref{T.4.0} in Section \ref{s.5}.

We thank 
Jessica Lin and Hung Tran
for useful discussions and pointers to literature.  YPZ acknowledges partial support by an AMS-Simons Travel Grant.  AZ acknowledges partial support by  NSF grant DMS-1900943 and by a Simons Fellowship.

\section{A Subadditive Theorem in Time-Dependent Environments} \lb{s.2}

In this section we provide for the convenience of the reader a new non-autonomous subadditive theorem, Theorem 1.2 in \cite{ZhaZla4} (also Remark 3 following it), that is a crucial ingredient in the proofs of our main results.  
Specifically, it will be used in the proofs of Lemmas \ref{L.2.4} and \ref{L.3.4} below.
    
\begin{theorem} \lb{T.1.1}   
Let $(\Omega,\bbP,\calF)$ be a probability space, and $\{\calF^{\pm}_t\}_{t\geq 0}$  two filtrations such that 
\[
\calF^{-}_s\subseteq\calF^{-}_{t}\subseteq \calF  \qquad\text{and}\qquad  \calF\supseteq \calF^{+}_{s}\supseteq\calF^{+}_{t}
\]
for all $t\geq s\geq 0$.
For any $t\ge 0$ and integers $n>m\geq 0$, let $X_{m,n}^t:\Omega\to [0,\infty)$ be a random variable.
Let there be $C\ge 0$ such that the following statements hold for all such $t,m,n$.
    \begin{enumerate}[(1)]
        \item $X_{m,n}^t \leq X_{m,k}^t+X_{k,n}^{t+X_{m,k}^t}$ for all $k\in\{m+1,\dots,n-1\}$;
        
        \smallskip
        
        \item $X_{0,1}^0\le C$;
        
        \smallskip
        
        \item the joint distribution of $\{X_{m,m+1}^t, X_{m,m+2}^t, \dots \}$ is independent of $(t,m)$;
        
        \smallskip
        
        \item   $X_{m,n}^t$ is $\calF^+_t$-measurable, and 
        $\{\omega\in\Omega\,|\, X_{m,n}^t(\omega)\le s\}\in \calF^{-}_{t+s}$ for any $s\ge 0$; 
        
         \smallskip
        
        \item  For some $\alpha>0$ we have $\lim_{s\to\infty}s^\alpha \phi(s)=0$, where 
\[
\phi(s):=\sup\left\{\left| \bbP[F|E] -\bbP[F]\right| \,\big|\,\, t\ge 0 \,\,\&\,\, (E,F)\in \calF_t^- \times \calF_{t+s}^+ \,\,\&\,\, \bbP[E]>0 \right\}.
\]
        
        \smallskip

        \item $X_{m,n}^t \leq X_{m,n}^{t+s}+s$ for all $s\in [C,C+c]$, with some $c>0$.
    \end{enumerate}
Then 
\beq\lb{11.1}
\lim_{n\to \infty}\frac{X_{0,n}^0 }{n}= \lim_{n\to \infty}\frac{\bbE\left[X_{0,n}^0\right]}{n}
\qquad\text{ almost surely.}
\eeq
Moreover, if in (5) we only have $\lim_{s\to\infty} \phi(s)=0$, then \eqref{11.1} holds in probability, as well as
\beq\lb{11.2}
\liminf_{n\to \infty}\frac{X_{0,n}^0 }{n}\ge \lim_{n\to \infty}\frac{\bbE\left[X_{0,n}^0\right]}{n}
\qquad\text{ almost surely.}
\eeq
\end{theorem}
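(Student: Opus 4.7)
My strategy is to prove, in order, (i) convergence of $a_n := \bbE[X_{0,n}^0]/n$ to some $\bar a \in [0,C]$; (ii) convergence of $X_{0,n}^0/n$ to $\bar a$ in probability; and (iii) the almost-sure statements under the respective mixing hypotheses. A preliminary observation is that iterating (1) with $k = m+1$ and applying (2)--(3) gives $X_{0,n}^0 \le Cn$ a.s., so $\bar a \in [0,C]$ and every quantity we work with is uniformly bounded.

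For (i), subadditivity (1) gives $\bbE[X_{0,m+n}^0] \le \bbE[X_{0,m}^0] + \bbE[X_{m,m+n}^{X_{0,m}^0}]$, and the task is to show the second term is at most $na_n + o(m+n)$. I would partition on the event $\{X_{0,m}^0 \in [j\delta,(j+1)\delta)\}$, which by (4) lies in $\calF^-_{(j+1)\delta}$, and use (6)---iterated so as to allow shifts by any amount larger than a constant depending only on $C$ and $c$---to replace $X_{m,m+n}^{X_{0,m}^0}$ on this event by $X_{m,m+n}^{t_j}$ with deterministic time $t_j := (j+1)\delta + s^*$, at additive cost $O(s^* + \delta)$. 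For this deterministic $t_j$, stationarity (3) gives $\bbE[X_{m,m+n}^{t_j}] = na_n$, and the mixing hypothesis (5) with gap $s^*$ controls the conditional expectation up to error $\phi(s^*) \cdot Cn$. Summing over $j$ and balancing $\delta$ and $s^*$ yields an approximate Fekete inequality, whence $a_n \to \bar a$.

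For (ii) and the polynomial-mixing part of (iii), I would iterate (1) to write $X_{0,n}^0 \le T_J := \sum_{j=1}^J Z_j$ for $n = Jk$, with $Z_j := X_{(j-1)k,jk}^{T_{j-1}}$ and $T_j := T_{j-1} + Z_j$. The same discretization-plus-mixing scheme applied pairwise to $Z_j$ and $Z_{j'}$ (with careful handling of the random times $T_{j-1}, T_{j'-1}$ via (4) and (6)) yields a covariance bound decaying in $|j-j'|$ at a rate controlled by $\phi$; Chebyshev then gives $T_J/(Jk) \to a_k$ in probability, and letting $k \to \infty$ produces $\bbP[X_{0,n}^0/n > \bar a + \eps] \to 0$ for every $\eps > 0$. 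Combining this upper-tail bound with $a_n \to \bar a$ and the uniform bound $X_{0,n}^0/n \le C$ in a three-region splitting of $\bbE[X_{0,n}^0/n]$ forces $\bbP[X_{0,n}^0/n < \bar a - \eps] \to 0$ as well, giving \eqref{11.1} in probability. Under polynomial mixing, the covariance bound is summable after optimizing $k$ against $n$, so Borel-Cantelli yields a.s.\ convergence along a subsequence $n_\ell$ with $n_{\ell+1}/n_\ell \to 1$, and the one-step Lipschitz estimate $|X_{0,n+1}^0 - X_{0,n}^0| \le C$ transfers this to the full sequence.

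The main obstacle is the a.s.\ $\liminf \ge \bar a$ under only $\phi \to 0$: convergence in probability without a quantitative rate does not directly give Borel-Cantelli summability along a ratio subsequence with $n_{\ell+1}/n_\ell \to 1$, and the one-sided estimate $X_{0,n}^0 \ge X_{0,n_{\ell+1}}^0 - C(n_{\ell+1}-n)$ from (1) only interpolates tightly when that ratio condition holds. My plan is to complement this interpolation with a density-of-bad-indices argument: the proportion of $m \le N$ with $X_{0,m}^0/m < \bar a - \eps$ tends to $0$ in $L^1$ by Fubini from (ii), and any anomaly at index $n_k$ propagates via the Lipschitz property to a whole interval $[n_k, (1+\eta)n_k]$ of $\eps/2$-bad indices, so a failure of the $\liminf$ would force this density to be bounded below on a positive-probability event; upgrading $L^1$ vanishing to the required a.s.\ statement will need a second application of (5) to decorrelate the bad-event indicators at distant $m$'s. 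Throughout, the deeper crux is that the random evaluation time in $X_{m,m+n}^{X_{0,m}^0}$ is handled neither by stationarity nor by mixing alone, and (4) together with (6) is exactly what permits discretizing this random time to deterministic witnesses in the correct filtration.
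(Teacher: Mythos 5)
The paper does not prove Theorem \ref{T.1.1}; it states it verbatim from the companion paper \cite{ZhaZla4}, so there is no ``paper's own proof'' here to compare against. Judged on its own merits, your proposal has the right broad architecture (Fekete for $\bbE[X_{0,n}^0]/n$, then concentration for $X_{0,n}^0/n$, then subsequence-plus-interpolation for almost-sure statements) and correctly identifies that (4) and (6) together are what let one trade the random start time for a deterministic one, but two steps contain genuine gaps.

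The covariance step (ii) does not go through as described. You decompose $X_{0,n}^0 \le T_J = \sum_{j=1}^J Z_j$ with $Z_j = X_{(j-1)k,jk}^{T_{j-1}}$ and assert a covariance bound for $Z_j, Z_{j'}$ ``decaying in $|j-j'|$ at a rate controlled by $\phi$.'' But (5) decouples events separated in the time parameter $t$, not in the block index $j$, and the time separation between $Z_j$ and $Z_{j'}$ is $T_{j'-1}-T_{j-1}=\sum_{i=j}^{j'-1} Z_i$. Since the $Z_i$ are nonnegative with no positive lower bound (none is assumed, and the theorem allows $\bar a=0$), this time gap can be arbitrarily small for arbitrarily large $|j-j'|$, so no covariance decay in the index follows from (4)--(6) and (5) without a substantial extra idea (a regeneration structure, or a fundamentally different decomposition). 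Your Chebyshev route for both \eqref{11.1} in probability and the polynomial-mixing a.s.\ case therefore rests on an unestablished claim.

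The density-of-bad-indices plan for \eqref{11.2} is broken at the step you flag as the crux. The indicators $I_m:=\chi\{X_{0,m}^0< m(\bar a-\eps/2)\}$ are, by (4), all $\calF_0^+$-measurable --- every $X_{0,m}^0$ is anchored at time $0$ --- and while each $I_m$ also lies in $\calF_{Cm}^-$, the mixing quantity $\phi(s)$ only controls dependence between an event in some $\calF_t^-$ and an event in some $\calF_{t+s}^+$. Two events that both live in $\calF_0^+$ cannot be time-separated this way, so ``a second application of (5) to decorrelate the bad-event indicators at distant $m$'s'' is not available. The upgrade from $L^1$-vanishing density to a.s.\ vanishing density is thus unsupported. (The presence of Katznelson--Weiss \cite{KW} in the bibliography is suggestive: the companion paper likely uses a stopping-time/covering argument in the K--W spirit, adapted to track the running random time, which circumvents both of the above obstacles rather than facing them head-on.)

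A minor notational slip: ``$|X_{0,n+1}^0 - X_{0,n}^0|\le C$'' is only one-sided; (1)--(3) give $X_{0,n+1}^0\le X_{0,n}^0+C$ but not the reverse. You do use the correct one-sided bounds $X_{0,n}^0\ge X_{0,n_{\ell+1}}^0-C(n_{\ell+1}-n)$ and $X_{0,n}^0\le X_{0,n_\ell}^0+C(n-n_\ell)$ in the interpolation, so this does not affect that step.
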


%

\section{Proof of Theorem \ref{T.1.0}(i)}\lb{s.3}

We will consider general initial times $t_0\in\bbR$, and it will be convenient  to rewrite \eqref{1.0} as
\beq\lb{1.1}
u_t=\calL_{\Upsilon(t_0,0)(\omega)} u+f(t_0+t,x,u,\omega)
\eeq
(recall \eqref{0.3}), so that the solutions we will consider will always be defined on $\bbR^+\times\bbR^d$.  Then
for each $(t_0,x_0,\omega)\in\bbR^{d+1}\times\Omega$, let  $u(\cdot,\cdot,\omega;{t_0},{x_0})$ be the solution to  \eqref{1.1}  satisfying the initial condition $u(0,\cdot,\omega;t_0,x_0):=\frac 12\chi_{B_1(x_0)}$.  For any $\theta\in (0,1)$, we let
\[
\Gamma_{\theta}(t,\omega;t_0,x_0):=\left\{x\in\bbR^d\,\big|\, u(t,x,\omega;{t_0},{x_0})\geq \theta\right\}
\]
be its $\theta$-super-level set at time $t\geq 0$.
Let us also denote $\Gamma_\theta(t,\omega):=\Gamma_\theta(t,\omega;0,0)$.

It is proved in \cite{ZlaKPPspread} (see the proof of Theorem 1.4 there) that a uniform {\it hair-trigger effect} holds under the hypotheses of Theorem \ref{T.1.0}.  Specifically, for any fixed $\theta\in(0,1)$ we have that any solution to \eqref{1.1} with $u(0,\cdot)\ge \theta \chi_{B_1(0)}$ converges locally uniformly on $\bbR^d$ to 1 as $t\to\infty$, and this convergence is uniform in all $(A,b,f)$ (as well as in all $(t_0,\omega)\in\bbR\times \Omega$)  that satisfy the hypotheses of Theorem \ref{T.1.0} uniformly --- that is, with the same  
\[
\gamma\in \left(0, \min \left\{ \lambda ,\, \|A\|_\infty^{-1},  \,\|f_u(\cdot,\cdot,0,\cdot)\|_{L^{\infty}}^{-1}, \,4 \lambda \inf_{(t,x,\omega)\in\bbR^{d+1} \times\Omega} f_u(t,x,0,\omega) - \|b\|_{L^\infty}^2   \right\} \right],
\]
the same Lipschitz lower bound $f_0:(0,1)\to(0,\infty)$ on $\tilde f(u):=\inf_{(t,x,\omega)\in\bbR^{d+1}\times\Omega}f(t,x,u,\omega)$, and the $\sup$ in \eqref{0.5} bounded above by the same $\psi(u)$ with $\lim_{u\to 0} \psi(u)=0$.
 Of course, the uniformity in $\omega$ then also extends the uniform convergence to any spatial shift of the initial datum, after accounting for the corresponding shift in the solution (because shifting the medium by $z\in\bbR^{d}$ simply amounts to changing $\omega$ to $\Upsilon_{(0,z)}(\omega)$).  Note that bootstrapping this claim then yields at least ballistic  spreading  of each super-level set in all directions (with the same positive lower bound on the spreading speed for all the super-level sets, because such lower bound for the $\frac 12$-super-level set also applies to all other $\theta\in(0,1)$ due to the hair-trigger effect).  On the other hand, a finite upper bound on the spreading speeds follows from $e^{at-(x-x_0)\cdot e}$ being a super-solution to \eqref{1.1} for any $(e,x_0)\in\bbS^{d-1}\times \bbR^d$, provided $a$ is large enough (depending only on $\gamma$ above --- see, e.g., the proof of Theorem 2.1 in \cite{ZlaKPPspread}).

%
%
%
%
%

In particular, there is $M\ge 1$ (which depends only on $\gamma,f_0,\psi$ above)  such that under the hypotheses of Theorem \ref{T.1.0} we have for all $(t_0,x_0,\omega)\in\bbR^{d+1}\times\Omega$,
\beq\lb{2.12}
B_{M^{-1}t}(x_0)\subseteq \Gamma_{1/2}(t,\omega;t_0,x_0)\subseteq B_{{M}t}(x_0)\qquad \text{ when $t\geq M$}.
\eeq
This immediately yields $u(s,\cdot,\omega;t_0,x_0)\geq u(0,\cdot,\omega;t_0+s,z)$ for any $(t_0,x_0,z,\omega)\in\bbR^{2d+1}\times\Omega$ and $s\geq {M}(|z-x_0|+1)$. Hence the comparison principle shows that for any $t\ge 0$, 
\beq\lb{2.31}
\Gamma_{1/2}(t,\omega;t_0+s,z )\subseteq\Gamma_{1/2}(t+s,\omega;t_0,x_0 )\qquad\text{when }s\geq {M}(|z-x_0|+1).
\eeq
Parabolic Harnack inequality \cite[Corollary 7.42]{Lie} shows that there is $\theta>0$ (depending only on $\gamma$) such that if $x\in \Gamma_{1/2}(t+s,\omega;t_0,x_0 )$ and $t+s\ge 1$, then $u(t+s+1,\cdot,\omega;t_0,x_0)\ge\theta\chi_{B_1(x)}$.  Hence if we increase $M$ to three times the maximum of $M$ and a time $\tau\ge 1$ such that under the hypotheses of Theorem \ref{T.1.0}, any solution to \eqref{1.1} with $u(0,\cdot)\ge \theta\chi_{B_1(0)}$ satisfies  $u(\tau,\cdot)\ge \frac 12\chi_{B_1(0)}$ (such $\tau$ exists due to the hair-trigger effect), then from \eqref{2.31} we obtain with any $s'\ge M$,
\beq\lb{2.31'}
B_{M^{-1}s'} \left( \Gamma_{1/2}(t,\omega;t_0+s,z ) \right)\subseteq\Gamma_{1/2}(t+s+s',\omega;t_0,x_0 )\qquad\text{when }s\geq {M}(|z-x_0|+1)
\eeq
by the hair-trigger effect and another usage of \eqref{2.12}.

Next let  the travel time to a point $x\in\bbR^d$, when starting at $(t_0,x_0)\in \bbR^{d+1}$, be
\[
\tau^{t_0}({x_0,x},\omega):=\inf\left\{t\geq 0\,\big|\, B_1(x)\subseteq \Gamma_{1/2}(t,\omega;t_0,x_0) \right\}.
\]
The comparison principle yields a space-time subadditivity property for these times, namely 
\beq\lb{2.1}
\tau^{t_0}({x_0,x},\omega)\leq \tau^{t_0}({x_0,z},\omega)+\tau^{t_0+\tau^{t_0}({x_0,z},\omega)}({z,x},\omega)
\eeq
for any $(z,\omega)\in\bbR^d \times \Omega$.
Due to \eqref{2.12}, we also have 
\beq\lb{2.0}
\tau^{t_0}({x_0,x},\omega)\leq {M}(|x-x_0|+1)
\eeq
for all $(t_0,x_0,x,\omega)\in \bbR^{2d+1} \times \Omega$.
Combining this with \eqref{2.1} yields
\beq\lb{2.7}
\tau^{t_0}({x_0,x},\omega)\leq \tau^{t_0}({x_0,z},\omega)+{M}(|x-z|+1)
\eeq
for all $(t_0,x_0,x,z,\omega)\in \bbR^{3d+1} \times \Omega$.
Finally, from \eqref{2.31}  we have 
\beq\lb{2.2}
\tau^{t_0}({x_0,x},\omega)\leq \tau^{t_0+t}({z,x},\omega)+t\qquad\text{ when $t\geq M(|z-x_0|+1)$.}
\eeq

We can now prove Theorem \ref{T.1.0}(i).  
We will first assume that $H:=(A,b,f)$ is space-time stationary (with $f$ understood as a $C([0,1])$-valued function on $\bbR^{d+1}\times\Omega$), rather than just $(A,b,f_u(\cdot,\cdot,0,\cdot))$.  
We will also denote $\calF_t^\pm:=\calF_t^\pm(H)$ for simplicity (see Definition \ref{D.2.3}).

We start with a lemma that shows that the travel times in any fixed direction are asymptotically linear.

\begin{lemma}\lb{L.2.4}
Assume the hypotheses of Theorem \ref{T.1.0}(i) but with $H:=(A,b,f)$ being space-time stationary. Then for  each $e\in\bbS^{d-1}$ there are $\Omega_e\subseteq \Omega$ and
\beq\lb{2.5}
\bar{\tau}(e)\in\left[M^{-1}, M \right],
\eeq
with $M$ from \eqref{2.12}, such that $\bbP[\Omega_e]=1$ and for each  $(t_0,x_0,\omega)\in \bbR^{d+1}\times\Omega_e$ we have
\beq\lb{2.4}
\lim_{r\to\infty}\frac{\tau^{t_0}({x_0,x_0+re},\omega)}{r}=\lim_{r\to\infty}\frac{\bbE\left[ \tau^{t_0}({x_0,x_0+re},\cdot)\right]}{r}=\bar{\tau}(e).
\eeq
Moreover, for any $e,e'\in\bbS^{d-1}$ we have
\beq\lb{2.6}
\max\left\{ |\bar{\tau}(e)-\bar{\tau}(e')|,\left|\frac1{\bar{\tau}(e)}-\frac1{\bar{\tau}(e')}\right|\right\}\leq M^3|e-e'|.
\eeq
\end{lemma}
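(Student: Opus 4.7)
The plan is to apply the non-autonomous subadditive Theorem \ref{T.1.1} to the family
\[
X_{m,n}^{t}(\omega) := \tau^{t}(me,\,ne,\,\omega),
\]
with filtrations $\calF^{\pm}_t := \calF^{\pm}_t(H)$ and constant $C := 2M$, extract the deterministic limit $\bar\tau(e)$ along $n\in\bbN$ with base point $(0,0)$, and then promote this to a single full-measure set $\Omega_e$ on which \eqref{2.4} holds simultaneously for every $(t_0,x_0)\in\bbR^{d+1}$ and for continuous $r\to\infty$.

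For the hypotheses of Theorem \ref{T.1.1}, condition (1) is exactly \eqref{2.1} with $z=ke$, condition (2) comes from \eqref{2.0}, and condition (6) is \eqref{2.2} with $x_0=z=me$ (which is valid for all $s\ge M$, so any $c>0$ works); condition (5) is precisely the mixing hypothesis of Theorem \ref{T.1.0}(i). For condition (4), the travel time $\tau^{t}(me,ne,\omega)$ is a measurable functional of the restriction of $H(\omega)$ to $[t,\infty)\times\bbR^d$ and is therefore $\calF^+_t$-measurable, while the event $\{\tau^{t}(me,ne,\cdot)\le s\}$ is in addition determined by $H$ on $[t,t+s]\times\bbR^d$, hence lies in $\calF^-_{t+s}$. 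Condition (3) reduces to the identity
\[
\tau^{t}(me,\,(m+k)e,\,\omega)\;=\;\tau^{0}(0,\,ke,\,\Upsilon_{(t,me)}\omega),
\]
which I would verify by using \eqref{0.3} to check that the two functions $(s,z)\mapsto u(s,z+me,\omega;t,me)$ and $(s,z)\mapsto u(s,z,\Upsilon_{(t,me)}\omega;0,0)$ solve the same PDE \eqref{1.1} with the same initial datum $\tfrac12\chi_{B_1(0)}$, and then invoking the measure-preserving property of $\Upsilon_{(t,me)}$. Theorem \ref{T.1.1} then produces a full-measure set on which $X_{0,n}^0/n$ and $\bbE[X_{0,n}^0]/n$ share a common deterministic limit $\bar\tau(e)$, and \eqref{2.12} forces $\bar\tau(e)\in[M^{-1},M]$.

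The upgrade to continuous $r$ is immediate from \eqref{2.7}, which yields $|\tau^0(0,re,\omega)-\tau^0(0,\lfloor r\rfloor e,\omega)|\le 2M$. To secure a single null set valid for all $(t_0,x_0)$, I would rerun the subadditive argument for each rational base point $(t_0,x_0)\in\bbQ^{d+1}$ and take the intersection of the countably many resulting full-measure sets to obtain $\Omega_e$. For an arbitrary $(t_0,x_0)$ and any $\omega\in\Omega_e$, picking a rational $(t_0',x_0')$ with $t_0'-t_0\ge M(|x_0-x_0'|+1)$ and combining \eqref{2.2} (to shift the base time from $t_0$ to $t_0'$) with \eqref{2.7} (to shift the spatial base point and the target by $x_0'-x_0$) gives
\[
\bigl|\tau^{t_0}(x_0,\,x_0+re,\,\omega)-\tau^{t_0'}(x_0',\,x_0'+re,\,\omega)\bigr|\;\le\; K
\]
with $K$ independent of $r$, so dividing by $r$ delivers the almost-sure half of \eqref{2.4}; the mean convergence follows similarly after noting that by stationarity $\bbE[\tau^{t_0}(x_0,x_0+re,\cdot)]=\bbE[\tau^0(0,re,\cdot)]$ for every $(t_0,x_0)$. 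For the Lipschitz bound \eqref{2.6}, applying \eqref{2.7} with $x=x_0+re$ and $z=x_0+re'$ yields $|\tau^{t_0}(x_0,x_0+re,\omega)-\tau^{t_0}(x_0,x_0+re',\omega)|\le M(r|e-e'|+1)$, and passing to the limit gives $|\bar\tau(e)-\bar\tau(e')|\le M|e-e'|$; the reciprocal bound then follows from $\bar\tau(e),\bar\tau(e')\ge M^{-1}$ and picks up the extra factor $M^2$, producing the $M^3$.

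The main obstacle I anticipate is the joint verification of (3) and (4) of Theorem \ref{T.1.1}, which amounts to a measurability-plus-stationarity statement for the random solution map of \eqref{1.1}: one must express $\tau^t$ as a genuine functional of the restriction of $H(\omega)$ to the appropriate half-space in time so that the coupling with both filtrations $\calF^\pm_t(H)$ is clean, while also producing the precise covariance $\tau^t(me,(m+k)e,\omega)=\tau^0(0,ke,\Upsilon_{(t,me)}\omega)$. The remaining upgrades --- from integer to continuous indices and from a countable dense set of base points to all of $\bbR^{d+1}$ --- are essentially deterministic bookkeeping with \eqref{2.7} and \eqref{2.2}.
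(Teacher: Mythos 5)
Your proposal is correct and follows essentially the same route as the paper: it applies Theorem \ref{T.1.1} to $X^t_{m,n}=\tau^{t}(me,ne,\cdot)$, extends from integer $n$ to real $r$ via \eqref{2.7}, promotes the full-measure set to all $(t_0,x_0)\in\bbR^{d+1}$ using stationarity together with \eqref{2.2} and \eqref{2.7}, and deduces \eqref{2.6} from \eqref{2.7} combined with $\bar\tau\geq M^{-1}$. The extra detail you supply in verifying hypotheses (3) and (4) of Theorem \ref{T.1.1} is accurate, and your choice $C=2M$ is harmless (indeed safer than $C=M$, which is what the paper uses, given that \eqref{2.0} only gives $\tau^0(0,e,\cdot)\le 2M$).
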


\begin{proof}
For any $e\in\bbS^{d-1}$, the hypotheses of Theorem \ref{T.1.1} hold with $X_{m,n}^t :=\tau^{t}(me,ne,\cdot)$.  Indeed:  (1) follows from \eqref{2.1}; (2) from \eqref{2.0}; (3) from space-time stationarity of $H$; (4) from the definition of $\calF_t^\pm$; (5) from the hypothesis;
and  (6) with $(C,c):=(M,\infty)$ from \eqref{2.2} with $z=x_0$.  Hence Theorem \ref{T.1.1} and \eqref{2.7} yield \eqref{2.4} with $(t_0,x_0)=(0,0)$ for almost all $\omega\in\Omega$, with \eqref{2.5} following from \eqref{2.12}.
Space-time stationarity of $H$ yields some full measure set $\Omega_e\subseteq\Omega$ such that \eqref{2.4} holds for all $(t_0,x_0)\in\bbZ^{d+1}$ when $\omega\in\Omega_e$, and this then extends to all $(t_0,x_0)\in\bbR^{d+1}$ by \eqref{2.2}.

Next, \eqref{2.1} and \eqref{2.0} yield for any $e,e'\in\bbS^{d-1}$,
\begin{align*}
\tau^0(0,ne,\omega)\leq \tau^0(0,ne',\omega)+\tau^{\tau^{0}(0,ne',\omega)}(ne,ne',\omega)\leq \tau^0(0,ne',\omega)+{M}(n|e-e'|+1).
\end{align*}
After dividing this by $n$ and taking $n\to\infty$, we obtain 
$\bar{\tau}(e)\leq \bar{\tau}(e')+{M}|e-e'|$.
This and $\bar{\tau}\geq M^{-1}$ yield \eqref{2.6}.
\end{proof}

Next, we show that solutions to \eqref{1.1} with localized initial data asymptotically approximate characteristic functions of a ballistically expanding deterministic Wulff shape
\beq\lb{2.3}
\calS:=\{se\,\big|\,e\in\bbS^{d-1}\text{ and }s\in [0,w(e))\}, 
\eeq
where the deterministic spreading speed in direction $e\in\bbS^{d-1}$ is
\beq\lb{2.3'}
w(e):=\bar{\tau}(e)^{-1}\in [{M}^{-1},{M}].
\eeq
Note that $\calS$ is bounded and open due to \eqref{2.6}.


\begin{theorem}\lb{T.2.5}
Under the hypotheses of Lemma \ref{L.2.4}, $\calS$ from \eqref{2.3} is convex, and it is a strong deterministic Wulff shape for \eqref{1.0}.  The latter means that for almost all $\omega\in\Omega$, 
\beq \lb{11.3}
\begin{aligned} 
    \lim_{t\to\infty}\inf_{|x_0|\leq\Lambda t} \inf_{x\in (1-\delta)t\calS}u(t,x_0+x,\omega;0,x_0) &=1,\\
\lim_{t\to\infty}\sup_{|x_0|\leq \Lambda t} \sup_{x\not\in (1+\delta)t\calS}u(t,x_0+x,\omega;0,x_0) &=0
\end{aligned}
\eeq
hold for each $\delta\in (0,1)$ and $\Lambda\ge 0$. 
\end{theorem}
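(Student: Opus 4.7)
The plan is to extend the deterministic function $\bar\tau$ from Lemma~\ref{L.2.4} to a $1$-homogeneous function on $\bbR^d$ via $\bar\tau(y):=|y|\bar\tau(y/|y|)$ (with $\bar\tau(0):=0$), so that $\calS=\{y\in\bbR^d:\bar\tau(y)<1\}$. Convexity of $\calS$ then reduces to subadditivity $\bar\tau(y_1+y_2)\le\bar\tau(y_1)+\bar\tau(y_2)$, while the spreading statement \eqref{11.3} reduces to upgrading \eqref{2.4} to convergence that is uniform in both $e\in\bbS^{d-1}$ and $|x_0|\le\Lambda t$.

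For \eqref{11.3}, fix a countable dense set $E\subseteq\bbS^{d-1}$; by countable intersection of the full-measure events supplied by Lemma~\ref{L.2.4} over $E$ and rational $(t_0,x_0)$, there is a single full-measure $\Omega^*$ on which $\tau^{t_0}(x_0,x_0+re,\omega)/r\to\bar\tau(e)$ for all rational arguments. The Lipschitz bounds \eqref{2.6} and \eqref{2.7} extend this to uniform convergence in $(x_0,e)$ over compact subsets of $\bbR^d\times\bbS^{d-1}$. To handle the $t$-growing ball $|x_0|\le\Lambda t$, I would extract from Theorem~\ref{T.1.1} under the polynomial mixing $s^\alpha\phi_H(s)\to 0$ a quantitative large-deviation rate $\bbP[\,|\tau^0(0,te,\cdot)/t-\bar\tau(e)|>\delta\,]=o(t^{-d})$, then union-bound over an $O(t^d)$ lattice inside $B_{\Lambda t}$ and apply Borel--Cantelli. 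With uniform convergence in hand, the lower half of \eqref{11.3} is immediate: for $y\in(1-\delta)t\calS$ and $|x_0|\le\Lambda t$ one has $\tau^0(x_0,x_0+y,\omega)\le(1-\delta/2)t$, placing $B_1(x_0+y)$ in $\Gamma_{1/2}(t,\omega;0,x_0)$, after which the uniform hair-trigger effect forces $u\to 1$. The upper half follows analogously, with the extra observation that $\Gamma_\theta$ shares the asymptotic speed $w(e)$ with $\Gamma_{1/2}$ for every $\theta\in(0,1)$ (by hair-trigger), so $u(t,x_0+y,\omega;0,x_0)\to 0$ uniformly for $y\notin(1+\delta)t\calS$.

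For convexity, I would apply \eqref{2.1} at the intermediate point $ry_1$,
\[
\tau^0(0,r(y_1+y_2),\omega)\le\tau^0(0,ry_1,\omega)+\tau^T(ry_1,r(y_1+y_2),\omega),\qquad T:=\tau^0(0,ry_1,\omega),
\]
and pass to expectations. Theorem~\ref{T.1.1} gives $\bbE[\tau^0(0,ry_i,\cdot)]/r\to\bar\tau(y_i)$ and $\bbE[\tau^0(0,r(y_1+y_2),\cdot)]/r\to\bar\tau(y_1+y_2)$; the delicate term is $\bbE[\tau^T(ry_1,r(y_1+y_2),\cdot)]$, with $T$ random of size $O(r)$. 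I would partition into $I_k:=\{T\in(kg(r),(k+1)g(r)]\}$ for some $g(r)\to\infty$ with $g(r)/r\to 0$ and $\phi_H(g(r))\to 0$. On $I_k$, \eqref{2.2} with $x_0=z=ry_1$ gives $\tau^T\le\tau^{(k+2)g(r)}+2g(r)$. Since $I_k\in\calF^-_{(k+1)g(r)}$ while $\tau^{(k+2)g(r)}(ry_1,r(y_1+y_2),\cdot)$ is $\calF^+_{(k+2)g(r)}$-measurable and bounded by $O(r)$, the standard uniform-mixing bound $|\bbE[Y\mid E]-\bbE[Y]|\le 2\|Y\|_\infty\phi_H(g(r))$ yields $\bbE[\tau^{(k+2)g(r)};I_k]\le\bbE[\tau^{(k+2)g(r)}]\bbP[I_k]+O(r\phi_H(g(r)))\bbP[I_k]$, and space-time stationarity identifies $\bbE[\tau^{(k+2)g(r)}(ry_1,r(y_1+y_2),\cdot)]$ with $\bbE[\tau^0(0,ry_2,\cdot)]$. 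Summing over $k$ and dividing by $r$,
\[
\frac{\bbE[\tau^T]}{r}\le\frac{\bbE[\tau^0(0,ry_2,\cdot)]}{r}+O(\phi_H(g(r)))+\frac{2g(r)}{r}\to\bar\tau(y_2),
\]
which yields $\bar\tau(y_1+y_2)\le\bar\tau(y_1)+\bar\tau(y_2)$ and hence convexity of $\calS$ as a sublevel set of a $1$-homogeneous subadditive function.

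The main obstacle is the convexity step, where the random stopping time $T$ of size $O(r)$ must be decoupled from the environment at times $\ge T$ via $\phi_H$; this requires a balanced choice of discretization scale $g(r)$ against the polynomial decay of $\phi_H$, which is feasible under $s^\alpha\phi_H(s)\to 0$ but genuinely delicate because the truncation-induced additive errors $2g(r)$ and mixing errors $O(r\phi_H(g(r)))$ must simultaneously be $o(r)$. A parallel technical point is extracting the quantitative LDP rate needed for the $|x_0|\le\Lambda t$ uniformity from the non-autonomous subadditive theorem; both steps rely on the polynomial mixing hypothesis rather than the weaker $\phi_H(s)\to 0$ used in part (ii) of Theorem~\ref{T.1.0}.
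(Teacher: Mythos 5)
Your convexity argument is a genuine alternative to the paper's: the paper defers convexity to Theorem~\ref{T.2.6} and proves it there by finding, for each $r$ and $\eps$, a single $\omega_{r,\eps}$ (via convergence in probability, Lemma~\ref{L.3.4}) on which the three relevant travel times are simultaneously near their means, so that subadditivity \eqref{2.1} and \eqref{2.2} give $|e_1+e_2|\bar\tau(e')\le\bar\tau(e_1)+\bar\tau(e_2)+5\eps$ directly. Your route of passing to expectations, decomposing over the random time $T=\tau^0(0,ry_1,\cdot)$ into windows $I_k\in\calF^-_{(k+1)g(r)}$ and paying a $\phi_H(g(r))$-mixing cost to decouple $\tau^{(k+2)g(r)}(ry_1,r(y_1+y_2),\cdot)\in\calF^+_{(k+2)g(r)}$ from $I_k$, does work with a balanced $g(r)$ (e.g.\ $g(r)=\sqrt r$), but it is heavier machinery for the same conclusion and does not exploit that convergence in probability already suffices.

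The serious gap is in the Wulff-shape half. You propose to ``extract from Theorem~\ref{T.1.1} under $s^\alpha\phi_H(s)\to 0$ a quantitative large-deviation rate $\bbP[\,|\tau^0(0,te,\cdot)/t-\bar\tau(e)|>\delta\,]=o(t^{-d})$'' and then union-bound over an $O(t^d)$ lattice in $B_{\Lambda t}$ and apply Borel--Cantelli. Theorem~\ref{T.1.1} gives only almost-sure (or in-probability) convergence with no rate, and no such polynomial concentration is available for arbitrary $\alpha>0$: for small $\alpha$ the mixing is far too weak to yield $o(t^{-d})$ for large $d$, and the statement of Theorem~\ref{T.1.1} certainly does not assert it. (Moreover, $o(t^{-d})$ is not even strong enough for your own Borel--Cantelli step after a union bound over $O(t^d)$ sites; you would need summability, i.e.\ roughly $O(t^{-d-1-\epsilon})$ along a discrete time sequence.) The paper sidesteps large deviations entirely: after establishing \eqref{2.10} for the fixed origin a.s., it applies Egorov's theorem to get a set $D_\delta$ with $\bbP[D_\delta]\ge1-\delta^{d+1}$ on which the convergence to the Wulff shape is uniform, then invokes Wiener's multiparameter ergodic theorem for the group $\{\Upsilon_{(s,y)}\}$ and uses $\phi_H(s)\to 0$ only to show the ergodic limit is a.s.\ constant equal to $\bbP[D_\delta]$. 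This gives, for a.e.\ $\omega$, a density bound on the bad shifts in $\calB_{2\Lambda t}$, whence for every $x_0\in B_{\Lambda t}$ one can find nearby space-time points $(\pm s_\pm,z_\pm)$ with $\Upsilon_{(\pm s_\pm,z_\pm)}(\omega)\in D_\delta$ and sandwich $\Gamma_{1/2}(t,\omega;0,x_0)$ between the two corresponding shifted reachable sets via \eqref{2.31'}. You should replace the LDP step with this Egorov--Wiener density argument, or otherwise prove a quantitative concentration estimate that Theorem~\ref{T.1.1} does not supply.
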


%

\begin{proof}
Convexity of $\calS$ will be proved in Theorem \ref{T.2.6} under slightly more general hypotheses.

For each $e\in \bbS^{d-1}$, let $\Omega_e$ be the set from Lemma \ref{L.2.4}. Let $Q$ be a countable dense subset of $\bbS^{d-1}$ and define $\Omega':=\cap_{e\in Q}\Omega_e$ (so $\bbP[\Omega']=1$).  Now fix any $\delta\in (0,1)$ and $\omega\in\Omega'$.  We will first show that there is $C_{\delta,\omega}>0$ such that for all $t\geq C_{\delta,\omega}$,
\beq\lb{2.10}
(1-\delta)t\calS\subseteq \Gamma_{1/2}(t,\omega)
\subseteq (1+\delta) t\calS.
\eeq

Let $\eps:=\frac{\delta}{3M}$
and let $e_1,\dots,e_N\in \calS\setminus\{0\}$ be such that $\frac{e_i}{|e_i|}\in Q$ and  $\calS\subseteq \bigcup_{i=1}^NB_\eps(e_i)$. Hence for any $t\ge 0$ and $v\in t\calS$, there is $i\in\{1,\dots,N\}$ such that $|v-te_i|\leq t\eps$. Then \eqref{2.7} shows that
\beq\lb{2.18}
\left| \tau^0(0,v,\omega)-\tau^0(0,te_i,\omega)\right|\leq {M}(t\eps+1).
\eeq
By Lemma \ref{L.2.4}, for all large enough $t$ we have 
\beq\lb{2.13}
\sup_{i\in\{1,\cdots,N\}}  \left|\frac{\tau^0(0,te_i,\omega)}{t|e_i|}-\frac{1}{w({e_i}{|e_i|^{-1}})}\right|\leq \eps
\eeq
Using \eqref{2.18}, \eqref{2.13}, and $|e_i|\leq w(\frac{e_i}{|e_i|})\leq {M}$ (by \eqref{2.3} and \eqref{2.3'}), for all large $t$ we obtain 
\[
\sup_{v\in t\calS}\tau^0(0,v,\omega)\leq \max_{i\in\{1,\cdots,N\}} \tau^0(0,te_i,\omega)+{M}(t\eps+1)\leq t+{M}(2t\eps+1).
\]
Hence 
\beq\lb{2.51}
t\calS\subseteq  \Gamma_{1/2} (t+{M}(2t\eps+1),\omega)
\eeq
holds for all large enough $t$. Since $2M\eps< \delta$,  the first inclusion in \eqref{2.10} follows.

Next let $e_1',\dots,e_{N'}'\subseteq\bbR^d\setminus\calS$ be such that $\frac{e_i'}{|e_i'|}\in Q$ and $B_{{M}}(0)\backslash \calS \subseteq \bigcup_{i=1}^{N'} B_\eps(e_i') $. Note that 
\beq\lb{2.14}
v\notin \Gamma_{1/2}(t,\omega) \qquad \text{ whenever $t\geq M$ and $v\in B_{{M}t}(0)^c$}
\eeq
due to \eqref{2.12}.
For each $v\in B_{{M}t}(0)\backslash t\calS$, there is $e_i'$ such that $|v-te_i'|\leq t\eps$ and then
\[
 \tau^0(0,v,\omega)\geq \tau^0(0,te_i',\omega)- {M}(t\eps+1).
\]
by \eqref{2.7}. Moreover, since now $w(\frac{e_i'}{|e_i'|}) \le |e_i'| \le {M}$ and \eqref{2.13} holds with $e_i'$ and $N'$ in place of $e_i$ and $N$, we obtain
\[
\inf_{v\in B_{{M}t}(0)\backslash t\calS}\tau^0(0,v,\omega)\geq \min_{i\in\{1, \cdots,N'\}} \tau^0(0,te_i',\omega)-{M}(t\eps+1)\geq t-{M}(2t\eps+1).
\]
This and \eqref{2.14} yield
$\Gamma_{1/2}(t-2{M}(t\eps+1),\omega)\subseteq  t\calS$ for all large enough $t$, so the second inclusion in \eqref{2.10} again follows by $2M\eps< \delta$.

We next want to upgrade \eqref{2.10} to the claim that for almost all $\omega$ we have for any $\Lambda\ge 1$ and  $\delta\in(0,1)$,
\beq\lb{2.10'}
x_0+(1-\delta) t\calS\subseteq\Gamma_{1/2}(t,\omega;0,x_0) 
\subseteq x_0+(1+\delta) t\calS
\eeq
for all large enough $t$ (depending on $\omega,\delta,\Lambda$) and all $x_0\in B_{\Lambda t}(0)$. 
This will finish the proof because parabolic Harnack inequality and hair-trigger effect show that for each $\theta\in (0,1)$, there is $C_\theta>0$ such that for all $(t_0,x_0,\omega)\in\bbR^{d+1}\times \Omega$ and $t\geq C_\theta+1$,
\[
\Gamma_{1/2}(t-C_\theta,\omega;t_0,x_0)\subseteq  \Gamma_\theta(t,\omega;t_0,x_0)\subseteq \Gamma_{1/2}(t+C_\theta,\omega;t_0,x_0).
\]

It therefore remains to show \eqref{2.10'}.  Fix any $\Lambda\ge 1$ and
\beq\lb{2.45}
\delta\in\left( 0,  ({26M\Lambda})^{-1} \right).
\eeq
By \eqref{2.10} and Egorov's Theorem, 
there are $\tau_{\delta}>0$ and $D_{\delta}\subseteq\Omega$ with $\bbP[D_{\delta}]\geq 1-\delta^{d+1}$ such that for each $\omega\in D_{\delta}$ and $t\geq \tau_{\delta}$,
\beq\lb{2.30}
(1-\delta) t\calS\subseteq \Gamma_{1/2}(t,\omega) 
\subseteq (1+\delta)  t\calS.
\eeq
It is clear that we can in fact pick $D_{\delta}$ from the $\sigma$-algebra generated by $\bigcup_{i\in\bbN} (\calF_0^+\cap \calF_i^-)$.
Let $\calF'$ be the $\sigma$-algebra generated by $\bigcup_{i\in\bbN}(\calF_{-i}^+\cap \calF_i^-)$ (or just replace $\calF$ by $\calF'$ from the very start) and apply Wiener's ergodic theorem (see, e.g.,  \cite[Theorems 2 and 3]{Bec}) with the group of transformations $\{\Upsilon_{(s,y)}\}_{(s,y)\in \bbR^{d+1}}$ on the probability space $(\Omega,\calF',\bbP)$.  It shows that there is $\Omega_{\delta}\in\calF'$ with $\bbP[\Omega_{\delta}]=1$ such that the following holds, with
\[
\varphi_{\delta,r}(\omega):=\frac{1}{|{\calB}_{ r}|}\int_{{\calB}_{ r}} \chi_{D_{\delta}} \left( \Upsilon_{(s,y)}(\omega) \right) dsdy
\]
and $\calB_r\subseteq\bbR^{d+1}$ the space-time ball of radius $r>0$ centered at the origin.
 The limit 
\[
\varphi_{\delta}(\omega):= \lim_{r\to\infty} \varphi_{\delta,r}(\omega) \, \in[0,1]
\]
(which is $\calF'$-measurable because $\varphi_{\delta,r}$ is measurable with respect to the $\sigma$-algebra generated by $\bigcup_{i\in\bbN} (\calF_{-r}^+\cap \calF_i^-)$) exists for each $\omega\in \Omega_{\delta}$, is invariant under the transformations $\{\Upsilon_{(s,y)}\}_{(s,y)\in \bbR^{d+1}}$, and satisfies
\[
\bbE[\varphi_{\delta}(\cdot)]=\bbE[\chi_{D_{\delta}}(\cdot)]=\bbP[D_{\delta}].
\]

Next we claim that $\varphi_{\delta}$ is a constant almost everywhere on $\Omega$.  If not, then there are $k\in\bbN$, $c>0$, and $A_1,A_2\in \calF_{-k}^+\cap\calF_{k}^-$ with $\bbP[A_1]\bbP[A_2]> 0$ such that
\beq\lb{2.41}
\left|\frac{1}{\bbP[A_1]}\bbE[\varphi_{\delta}(\cdot)\chi_{A_1}(\cdot)]- \frac{1}{\bbP[A_2]}\bbE[\varphi_{\delta}(\cdot)\chi_{A_2}(\cdot)]\right|\geq c.
\eeq
Fix $C'>0$ and note that $\varphi_{\delta,r}(\Upsilon_{(r+k+C',0)}(\cdot))$ is $\calF_{k+C'}^+$-measurable.  Hence the definition of $\phi_H$, the fact that $A_j\in\calF^-_k$, and $0\le \varphi_{\delta,r} \le 1$ yield for $j=1,2$,
\begin{align*}
\bbE[\varphi_{\delta,r}(\Upsilon_{(r+k+C',0)}(\cdot))\chi_{A_j}(\cdot)] & =\int_0^1 \bbP\left[\varphi_{\delta,r}(\Upsilon_{(r+k+C',0)}(\cdot))>\mu\,\,\&\,\,\chi_{A_j}=1\right]  d\mu\\
& \leq \int_0^1 \left(\bbP\left[\varphi_{\delta,r}(\Upsilon_{(r+k+C',0)}(\cdot))>\mu\right]+\phi_H(C')\right)\bbP\left[ A_j\right]  d\mu\\
& \leq\bbE[\varphi_{\delta,r}(\Upsilon_{(r+k+C',0)}(\cdot))]\bbP\left[ A_j\right]+\phi_H(C').
\end{align*}
Similarly,
\[
\bbE[\varphi_{\delta,r}(\Upsilon_{(r+k+C',0)}(\cdot))\chi_{A_j}(\cdot)]\geq\bbE[\varphi_{\delta,r}(\Upsilon_{(r+k+C',0)}(\cdot))]\bbP\left[ A_j\right]-\phi_H(C').
\]
Since $\lim_{s\to\infty}\phi_H(s)=0$ and $\bbP[A_j]>0$ for $j=1,2$, taking  sufficiently large  $C'$ yields
\[
\left|\frac{1}{\bbP\left[ A_j\right]}\bbE[\varphi_{\delta,r}(\Upsilon_{(r+k+C',0)}(\cdot))\chi_{A_j}(\cdot)] -\bbE[\varphi_{\delta,r}(\Upsilon_{(r+k+C',0)}(\cdot))]\right|\leq \frac{c}{4}.
\]
Since $\varphi_{\delta,r}\to\varphi_{\delta}$ almost surely as $r\to\infty$, and $0\le \varphi_{\delta,r} \le 1$, we thus obtain
\[
\left|\frac{1}{\bbP[A_j]}\bbE[\varphi_{\delta}(\Upsilon_{(r+k+C',0)}(\cdot))\chi_{A_j}(\cdot)]-\bbE[\varphi_{\delta,r}(\Upsilon_{(r+k+C',0)}(\cdot))]\right|<\frac{c}{2}
\]
for all large enough $r$ and $j=1,2$.
However, since $\varphi_{\delta}\circ \Upsilon_{(r+k+C',0)} = \varphi_{\delta}$, this contradicts 
\eqref{2.41}.
Thus we see that $\varphi_{\delta}(\omega) = \bbP[D_{\delta}]$ for almost all $\omega\in\Omega$.

This means that there is $\Omega_{\delta}'\subseteq\Omega_{\delta}$ with $\bbP[\Omega_{\delta}']=1$ such that  for each $\omega\in \Omega_{\delta}'$ we have
\[
\lim_{r\to\infty} \frac{1}{|{\calB}_{ r}|}\int_{{\calB}_{ r}} \chi_{D_{\delta}}(\Upsilon_{(s,y)}(\omega))dsdy=\bbP[D_{\delta}]\geq 1-\delta^{d+1}.
\]
Thus there is $t_{\omega,\delta,\Lambda}\ge \max\{\tau_\delta, \frac 1{\delta}\} $ such that for all $t\geq t_{\omega,\delta,\Lambda}$ we have
\[
\left|\left\{(s,z)\in  \calB_{2\Lambda t}\,|\,\Upsilon_{(s,z)}(\omega)\notin D_{\delta}\right\}\right|\leq 2\delta^{d+1}\left|\calB_{2\Lambda t}\right|.
\]
For any  $t\geq t_{\omega,\delta,\Lambda}$, let $C_t:=M(2\delta\Lambda t+1)\le 3M\delta\Lambda t$ (because $t\ge \frac 1 {\delta \Lambda}$).
Then for any $x_0\in B_{\Lambda t}(0)$, there are 
\[
(s_\pm,z_\pm)\in [C_t,C_t+8\delta\Lambda t] \times B_{2\delta \Lambda t}(x_0) \subseteq \calB_{2\Lambda t}
\]
satisfying $\Upsilon_{(\pm s_\pm,z_\pm)}(\omega)\in D_{\delta}$ (note that $(-2\delta \Lambda t,2\delta \Lambda t) \times B_{2\delta \Lambda t}(0) \supseteq \calB_{2\delta \Lambda t}$, while \eqref{2.45} implies  $C_t+10\delta\Lambda t \le 13M\delta \Lambda t \le  \Lambda t$). 
Now let $c_{\delta,\Lambda}:=13M\delta\Lambda$ ($\le \frac 12$ by \eqref{2.45}).
Since we have $s_\pm \geq C_t\geq M(|z_\pm-x_0|+1)$ and $2M\delta\Lambda t\ge M$, as well as
\[
s_\pm + 2M\delta\Lambda t \le C_t+10M\delta\Lambda t \le c_{\delta,\Lambda} t,
\]
from \eqref{2.31'}, \eqref{2.30}, and $\Upsilon_{(\pm s_\pm,z_\pm)}(\omega) \in D_{\delta}$ we obtain
\begin{align*}
\Gamma_{1/2}(t,\omega;0,x_0)-x_0  & \subseteq \Gamma_{1/2}(t+s_-+2M\delta\Lambda t,\omega; -s_-,z_-)-z_- 
\\ & = \Gamma_{1/2}(t+s_- +2M\delta\Lambda t ,\Upsilon_{(-s_-,z_-)}(\omega))  \subseteq (1+\delta)(1+c_{\delta,\Lambda}) t\calS
\end{align*}
and
\begin{align*}
\Gamma_{1/2}(t,\omega;0,x_0) -x_0 & \supseteq \Gamma_{1/2}(t-s_+-2M\delta\Lambda t,\omega;s_+,z_+) -z_+
\\ & = \Gamma_{1/2}(t-s_+-2M\delta\Lambda t,\Upsilon_{(s_+,z_+)}(\omega))  \supseteq (1-\delta)(1-c_{\delta,\Lambda}) t\calS
\end{align*}
for any $\omega\in\Omega'_\delta$ and $t\geq t_{\omega,\delta,\Lambda}$.
Since $\lim_{\delta\to 0}c_{\delta,\Lambda}=0$ for each $\Lambda\ge 1$, this shows that for each $\omega\in \Omega'':= \bigcap_{L\in(26M,\infty)\cap\bbN} \Omega'_{1/L}$ (so $\bbP[\Omega'']=1$) and  $(\delta,\Lambda)\in(0,1)\times [1,\infty)$,   we indeed have \eqref{2.10'} 
 when $t$ is large enough and $x_0\in B_{\Lambda t}(0)$.
\end{proof}

If now only $H:=(A,b,f_u(\cdot,\cdot,0,\cdot))$ is space-time stationary, we let 
\beq\lb{0.9}
f'(t,x,u,\omega):=f_u(t,x,0,\omega) \min\{u,1-u\},
\eeq
so that Lemma \ref{L.2.4} and  Theorem \ref{T.2.5} apply to \eqref{1.0} with $f'$ in place of $f$.  One can then use {\it virtual linearity} of \eqref{1.0}  with KPP $f$, expressed in Theorem 1.2 in \cite{ZlaKPPspread}, to conclude Theorem~\ref{T.1.0}(i) for $f'$, as well as to show that the leading order solution dynamics (as $t\to\infty$) of \eqref{1.0} with the two reactions coincide (this then proves Theorem \ref{T.1.0}(i) for $f$, and also that $\calS$ only depends on $(A,b,f_u(\cdot,\cdot,0,\cdot))$).  This argument is identical to that in the last third of the proof of Theorem 1.4 in \cite{ZlaKPPspread}, which concerns time-periodic and spatially stationary ergodic $(A,b,f)$, because it only uses Theorem 1.2 in \cite{ZlaKPPspread} (which applies to general space-time-dependent $(A,b,f)$) and Theorem \ref{T.2.5}.
We therefore skip it; the next section contains an analog in the setting of Theorem \ref{T.1.0}(ii).  We also note that this  shows that Lemma \ref{L.2.4} and  Theorem \ref{T.2.5} hold even if only $(A,b,f_u(\cdot,\cdot,0,\cdot))$  (and not $(A,b,f)$) is space-time stationary.

\section{Proof of Theorem \ref{T.1.0}(ii)} \lb{s.4}

The arguments from the start of the previous section (prior to Lemma \ref{L.2.4}) also apply here, and we will again consider \eqref{1.1} as well as $u(t,x ,\omega;{t_0},{x_0})$, $\Gamma_\theta(t,\omega;{t_0},{x_0})$, and $\tau^t_0(x_0,x ,\omega)$ as above.  We will also again first assume that $H:=(A,b,f)$ is space-time stationary, and denote $\calF_t^\pm:= \calF_t^\pm(H)$.
We now have the following analogs of Lemma \ref{L.2.4} and Theorem \ref{T.2.5}.

\begin{lemma}\lb{L.3.4}
Assume the  hypotheses of Lemma \ref{L.2.4}, but with $\lim_{s\to\infty}\phi_H(s)=0$ for $H:=(A,b,f)$ instead of $\lim_{s\to\infty}s^\alpha \phi_H(s)=0$. Then for each $e\in\bbS^{d-1}$, there is $\bar{\tau}(e)$ satisfying \eqref{2.5} and \eqref{2.6} such that for each $(t_0,x_0)\in \bbR^{d+1}$ we have
\beq\lb{2.15}
\lim_{r\to\infty}\frac{\tau^{t_0}(x_0,x_0+re,\cdot)}{r} =\lim_{r\to \infty}\frac{\bbE\left[\tau^{t_0}(x_0,x_0+re,\cdot)\right]}{r}=\bar{\tau}(e)\qquad\text{ in probability}.
\eeq
\end{lemma}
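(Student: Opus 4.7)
The proof will mirror that of Lemma \ref{L.2.4}, but invoke only the ``moreover'' conclusion of Theorem \ref{T.1.1}. Fix $e\in\bbS^{d-1}$ and set $X_{m,n}^t:=\tau^t(me,ne,\cdot)$ for integers $0\le m<n$ and $t\ge 0$. The verification of hypotheses (1)--(4) and (6) of Theorem \ref{T.1.1} is unchanged: (1) is the subadditivity \eqref{2.1}, (2) is the linear bound \eqref{2.0}, (3) is space-time stationarity of $H$, (4) is the definition of $\calF_t^\pm$, and (6) (with $(C,c):=(M,\infty)$) is \eqref{2.2} with $z=x_0$. The only change is in hypothesis (5), which now holds only in the weaker form $\lim_{s\to\infty}\phi_H(s)=0$.

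The second assertion of Theorem \ref{T.1.1} then yields, with $\bar\tau(e):=\lim_{n\to\infty}n^{-1}\bbE[X_{0,n}^0]$,
\[
\lim_{n\to\infty}\frac{X_{0,n}^0}{n}=\bar\tau(e) \qquad\text{in probability},
\]
and $\bar\tau(e)\in[M^{-1},M]$ is a consequence of \eqref{2.12}, giving \eqref{2.5}. The Lipschitz estimate \eqref{2.6} is then derived exactly as in Lemma \ref{L.2.4} from \eqref{2.1} and \eqref{2.0}, since that argument only uses the existence of $\bar\tau(e)$ as a limit of $n^{-1}\bbE[X_{0,n}^0]$, not any mode of convergence. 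To pass from integer to continuous $r$, set $n_r:=\lfloor r\rfloor$ and apply \eqref{2.7} to get $|\tau^0(0,re,\omega)-X_{0,n_r}^0(\omega)|\le M(|r-n_r|+1)\le 2M$ uniformly in $\omega$. Hence the difference $|r^{-1}\tau^0(0,re,\cdot)-n_r^{-1}X_{0,n_r}^0|$ is $O(r^{-1})$ deterministically, so convergence in probability transfers from the integer sequence to $r^{-1}\tau^0(0,re,\cdot)$ as $r\to\infty$; dominated convergence (using the deterministic bound \eqref{2.0}) likewise transfers the convergence of expectations.

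It remains to remove the restriction $(t_0,x_0)=(0,0)$. For any fixed $(t_0,x_0)\in\bbR^{d+1}$, space-time stationarity of $H$ implies that $\omega\mapsto \tau^{t_0}(x_0,x_0+re,\omega)$ and $\omega\mapsto \tau^0(0,re,\Upsilon_{(t_0,x_0)}\omega)$ coincide; since $\Upsilon_{(t_0,x_0)}$ is measure-preserving, $\tau^{t_0}(x_0,x_0+re,\cdot)$ has the same distribution as $\tau^0(0,re,\cdot)$. Convergence in probability to the deterministic limit $\bar\tau(e)$ depends only on the distribution, so \eqref{2.15} holds for every such $(t_0,x_0)$, and the same identification gives the equality of expectations.

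\textbf{Main obstacle.} The crux of the matter, and the reason the conclusion here is weaker than in Lemma \ref{L.2.4}, is that the polynomial rate on $\phi_H$ was used there to upgrade convergence in probability to an almost sure limit, which could then be extended to all $(t_0,x_0)\in\bbZ^{d+1}$ simultaneously via a countable intersection of full-measure sets, and then to all of $\bbR^{d+1}$ through the continuity estimate \eqref{2.2}. With only $\phi_H(s)\to 0$, convergence in probability does not enjoy the countable-intersection property, so the exceptional null set depends on $(t_0,x_0)$ and the statement must remain pointwise in $(t_0,x_0)$. The a.s.\ lower bound \eqref{11.2} from Theorem \ref{T.1.1} is still available for $(t_0,x_0)=(0,0)$, which is the piece that will suffice in the subsequent arguments leading to Theorem \ref{T.1.0}(ii).
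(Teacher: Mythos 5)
Your proposal is correct and takes essentially the same route as the paper, whose proof of this lemma is simply the remark that it is ``same as Lemma \ref{L.2.4}, using the convergence in probability claim in Theorem \ref{T.1.1}.'' You have merely unpacked the details that are compressed in the paper: the passage from integer $n$ to real $r$ via \eqref{2.7}, and the observation that since $\tau^{t_0}(x_0,x_0+re,\cdot)$ and $\tau^0(0,re,\cdot)$ are equidistributed (via $\Upsilon_{(t_0,x_0)}$ being measure-preserving), convergence in probability to the deterministic constant $\bar\tau(e)$ transfers to every $(t_0,x_0)$ without needing the countable-intersection argument that the almost-sure version required.
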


\begin{proof}
Same as Lemma \ref{L.2.4},  using the convergence in probability claim in Theorem \ref{T.1.1}. 
\end{proof}


\begin{theorem}\lb{T.2.6}
Under the hypotheses of Lemma \ref{L.3.4}, $\calS$ from \eqref{2.3} with $w$ from \eqref{2.3'} is convex, and
it is a strong Wulff shape for \eqref{1.0} in probability.  The latter means that
\beq\lb{2.17}
\begin{aligned}
\lim_{t\to\infty}\bbP\left[(1-\delta) s\calS\subseteq\Gamma_{\theta}(s,\cdot;0,x_0)-x_0\subseteq (1+\delta) s\calS\,\,\forall (s,x_0)\in  [\delta t ,\delta^{-1}t] \times B_{\Lambda t} \right]=1
\end{aligned}
\eeq
holds  for each $\delta,\theta\in (0,1)$ and $\Lambda \geq 0$.
\end{theorem}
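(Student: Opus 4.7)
The plan is to follow the three-part structure of the proof of Theorem \ref{T.2.5}, exploiting the fact that the in-probability conclusion \eqref{2.17} is substantially weaker than the almost-sure uniform conclusion \eqref{2.10'}; in particular, I can replace Wiener's ergodic theorem step by a plain union bound over a $t$-independent finite grid.

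First I would prove the in-probability Wulff inclusion at the origin,
\[
\lim_{t\to\infty} \bbP\left[(1-\delta) s\calS \subseteq \Gamma_{1/2}(s,\omega) \subseteq (1+\delta) s\calS \,\,\forall s\in [\delta t/2,\, 2\delta^{-1}t]\right] = 1,
\]
which is the direct analog of \eqref{2.10}. I would take a finite $\eps$-cover $\{e_i\}_{i=1}^{N'}\subseteq Q$ of $\bbS^{d-1}$ and a finite grid $\{s_k\}_{k=1}^K$ of $[\delta t/2, 2\delta^{-1}t]$ with spacing $\eps t$ (with $\eps$ small depending on $\delta, M$). Since $s_k\ge \delta t/2\to\infty$ uniformly in $k$, Lemma \ref{L.3.4} gives $\bbP[|\tau^0(0,s_ke_i,\cdot)/s_k - \bar\tau(e_i)|>\eps]\to 0$ for each of the $N'K$ (finitely many, $t$-independent) pairs; a union bound and interpolation via \eqref{2.7} then finish the argument, exactly following the lines after \eqref{2.18} in the proof of Theorem \ref{T.2.5}.

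To upgrade to uniformity in $x_0\in B_{\Lambda t}$, let $D_\delta^{(t)}$ denote the event above. Fix $\eta>0$ small in terms of $\delta,\Lambda,M$, set $\Delta:=M(\eta t\sqrt d+1)$, and take the spatial grid $G_t^x:=\eta t\bbZ^d \cap B_{\Lambda t}$ of $t$-independent cardinality $N=O((\Lambda/\eta)^d)$. By space-time stationarity, $\bbP[\Upsilon_{(\Delta,x_0')}\omega\in D_\delta^{(t)}] = \bbP[D_\delta^{(t)}]$ for every $x_0'\in G_t^x$, so a union bound over the fixed finite grid produces an event of probability $\to 1$ on which $\Upsilon_{(\Delta,x_0')}\omega\in D_\delta^{(t)}$ for every $x_0'\in G_t^x$. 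For a general $x_0\in B_{\Lambda t}$, let $x_0'\in G_t^x$ be the nearest grid point (so $\Delta\ge M(|x_0-x_0'|+1)$); applying \eqref{2.31'} with $t_0=0$, $s=\Delta$, $z=x_0'$, $s'=M$, then using stationarity to rewrite the right-hand side in terms of $\Upsilon_{(\Delta,x_0')}\omega$, and invoking the Wulff inclusion from $D_\delta^{(t)}$ together with the elementary fact $z+r\calS\supseteq (1-M|z|/r)r\calS$ for $|z|\le M^{-1}r$ (valid since $\calS\supseteq B_{M^{-1}}$), yields $\Gamma_{1/2}(u,\omega;0,x_0)-x_0\supseteq (1-\delta)u\calS$ for all $u\in[\delta t,\delta^{-1}t]$; the reverse inclusion is analogous using \eqref{2.31}. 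Parabolic Harnack and the hair-trigger effect then extend $\theta=1/2$ to arbitrary $\theta\in(0,1)$, as at the end of Theorem \ref{T.2.5}.

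For convexity of $\calS$, with $\hat\tau(x):=|x|\bar\tau(x/|x|)$, convexity of $\calS=\{\hat\tau<1\}$ reduces (via its 1-homogeneity) to subadditivity $\hat\tau(u+v)\le\hat\tau(u)+\hat\tau(v)$. I would derive this from \eqref{2.1} applied to $\tau^0(0,n(u+v),\cdot)$ and the expectation convergence $\bbE[\tau^0(0,nx,\cdot)]/n\to\hat\tau(x)$ provided by Theorem \ref{T.1.1}, handling the random superscript $T=\tau^0(0,nu,\cdot)$ in the second summand by combining stationarity with the deterministic bound $T\le Mn|u|+M$ from \eqref{2.0} and a conditioning argument on dyadic bins for $T$. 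The main obstacle is the bookkeeping in the interpolation step of the second paragraph: precision losses from the time shift $\Delta=O(\eta t)$ in \eqref{2.31'}, the spatial shift $|x_0-x_0'|=O(\eta t)$, and the inherited precision $\delta/2$ from $D_\delta^{(t)}$ must combine to at most $\delta$, while $\eta$ is kept independent of $t$ so that the grid cardinality $N$ remains bounded. The critical advantage over the setting of Theorem \ref{T.2.5} is exactly that the in-probability conclusion permits this $t$-independent cardinality, obviating the Wiener-ergodic-theorem step used there.
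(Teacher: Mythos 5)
Your plan tracks the paper's overall architecture closely (in-probability Wulff inclusion at the origin via a $t$-independent direction-and-time grid and union bound; translation to moving base points $x_0\in B_{\Lambda t}$ by stationarity over a $t$-independent spatial grid; convexity via subadditivity of travel times). The observation that the in-probability conclusion permits the union bound over a fixed finite grid, obviating Wiener's ergodic theorem from Theorem \ref{T.2.5}, is exactly right and is the key simplification the paper exploits. The main divergence is in the $\Lambda>0$ upgrade: you absorb the spatial mismatch $x_0-x_0'$ into a rescaling of $\calS$ via the inclusion $z+r\calS\supseteq(1-M|z|/r)\,r\calS$ (and its upper-bound counterpart), whereas the paper instead keeps the shift in the reachable set and absorbs it through the free $B_{M^{-1}s'}$-enlargement already built into \eqref{2.31'}, combined with pure rescalings of the form \eqref{2.34}--\eqref{2.34'} which require only $0\in\calS$. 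Your route is valid, but it \emph{requires convexity of $\calS$}, which you only establish in the final part of your proposal --- this is a genuine circularity that must be repaired by proving convexity first (as the paper does). The paper's enlargement trick sidesteps this dependency entirely.

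The convexity argument as you sketch it is the one real gap. The difficulty with taking expectations in \eqref{2.1} is that the random superscript $T=\tau^0(0,nu,\cdot)$ is correlated with the environment driving the second arrival time $\tau^{T}(nu,n(u+v),\cdot)$; your proposal to handle this by ``conditioning on dyadic bins for $T$'' is not a concrete argument and would be delicate to make precise, since one must control $\bbE\big[\tau^{T}(nu,n(u+v),\cdot)\big]$ without independence of $T$ from the future of the environment. The paper proceeds much more directly: the convergence in probability from Lemma \ref{L.3.4} yields, for each $\eps>0$ and all large $r$, a \emph{single} $\omega_{r,\eps}$ in a positive-probability event for which $\tau^0(0,re_1,\omega_{r,\eps})$, $\tau^0(0,|e_1+e_2|re',\omega_{r,\eps})$, and --- after applying the deterministic monotonicity \eqref{2.2} to replace the random superscript by the nearly deterministic value $r\bar\tau(e_1)+r\eps$ and then using stationarity again --- $\tau^{\tau^0(0,re_1,\omega_{r,\eps})}(re_1,r(e_1+e_2),\omega_{r,\eps})$ are all within $O(r\eps)$ of their means. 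Plugging into \eqref{2.1} and letting $\eps\to 0$ then gives the subadditivity $|e_1+e_2|\bar\tau(e')\leq\bar\tau(e_1)+\bar\tau(e_2)$, and convexity follows from the angle bisector theorem. You should replace the dyadic-bin sketch with this cleaner pointwise selection argument, and move convexity before the $\Lambda>0$ upgrade (or adopt the paper's enlargement mechanism, which removes the convexity prerequisite altogether).
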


\begin{proof}

Let $e_1,e_2\in\bbS^{d-1}$ be arbitrary with $e_2\neq -e_1$, and let $e':=\frac{e_1+e_2}{|e_1+e_2|}$.  From \eqref{2.1} and $|e_1+e_2|e'=e_1+e_2$ and we obtain for each $r>0$ and $\omega\in\Omega$,
\beq\lb{2.8}
\tau^0(0,|e_1+e_2|re',\omega)\leq \tau^0(0,re_1,\omega)+\tau^{\tau^0(0,re_1,\omega)}(re_1,r(e_1+e_2),\omega).
\eeq
Then \eqref{2.15} shows that for each $\eps>0$ and any large enough $r$,
there is  $\omega_{r,\eps}\in\Omega$ such that
\[
\max \left \{ |\tau^0(0,|e_1+e_2|re',\omega_{r,\eps})-|e_1+e_2|r\bar{\tau}(e')|,\, |\tau^0(0,re_1,\omega_{r,\eps})- r\bar{\tau}(e_1)| \right\}\leq r\eps,
\]
as well as (using also \eqref{2.2})
\[
\tau^{\tau^0(0,re_1,\omega_{r,\eps})}(re_1,r(e_1+e_2),\omega_{r,\eps})\leq \tau^{r\bar{\tau}(e_1)+r\eps}(re_1,r(e_1+e_2),\omega_{r,\eps})+2r\eps\leq r\bar{\tau}(re_2)+3r\eps.
\]
From these and \eqref{2.8} we obtain
\[
|e_1+e_2|\bar\tau(e')\leq \bar\tau(e_1)+\bar\tau(e_2)+5\eps,
\]
and after taking $\eps\to 0$ this becomes
\[
w(e')\geq \frac{w(e_1)w(e_2)}{w(e_1)+w(e_2)}|e_1+e_2|.
\]
The angle bisector theorem now shows that $\calS$ is convex.

It remains to prove \eqref{2.17}.  Let $\eps:=\frac{\delta^2}{10M(1+M\delta)}$, let $s_1,\ldots,s_N\in [\delta,\delta^{-1}]$ be such that $[\delta,\delta^{-1}]\subseteq \bigcup_{j=1}^NB_\eps(s_j)$,
and let $e_1,\ldots,e_N\in \calS\backslash\{0\}$ be such that $\calS\subseteq \bigcup_{i=1}^NB_\eps(e_i)$. Thus for any $t>0$, and any $s\in [\delta t, \delta^{-1}t]$ and $v\in  s\calS$, there are $i,j\in\{1,\ldots,N\}$ such that $|s-ts_j|\leq t\eps$ and $|v-se_i|\leq s\eps\leq \delta^{-1} t\eps$. It follows from \eqref{2.7} and $|e_i|\leq M$ that
\beq\lb{2.18'}
\left| \tau^0(0,v,\omega)-\tau^0(0,ts_je_i,\omega)\right|\leq {M}(|v-ts_je_i|+1)\leq M(\delta^{-1} t\eps+Mt\eps+1).
\eeq
By Lemma \ref{L.3.4}, we also have
\beq\lb{2.16}
\lim_{t\to\infty} \bbP\left [    \left|\frac{{\tau^0}(0,ts_je_i,\cdot)}{ts_j|e_i|}-\frac{{1}}{w({e_i}{|e_i|^{-1}})}\right|\geq \eps \right]=0
\eeq
for each $i,j\in\{1,\ldots,N\}$.
From \eqref{2.18'} and $s_j|e_i|\leq M\delta^{-1} $ we obtain
\begin{align*}
\bbP & \left[\sup_{v\in  s\calS}\tau^0(0,v,\cdot)\geq (s+t\eps)+ M\delta^{-1}t\eps +M(\delta^{-1} t\eps+Mt\eps+1)\,\text{ for some }s\in [\delta t, \delta^{-1}t]\right]\\
&\qquad\leq \bbP\left[\max_{i,j\in\{1,\cdots,N\}} \tau^0(0,ts_je_i,\cdot)\geq ts_j+M\delta^{-1} t\eps \right] \\
&\qquad\leq \sum_{i,j=1}^N\bbP\left[ \tau^0(0,ts_je_i,\cdot)\geq ts_j+ ts_j|e_i|\eps\right],
\end{align*}
which converges to $0$ as $t\to \infty$ by \eqref{2.16} and $|e_i|\le w(\frac{e_i}{|e_i|})$. This implies that
\beq\lb{2.19}
\lim_{t\to\infty}\bbP\left[ s\calS\subseteq\Gamma_{1/2}(s+2M(\delta^{-1}+M)t\eps +M,\cdot)\,\,\forall s\in[\delta t, \delta^{-1}t]\right]=1.
\eeq

Next, let $e_1',\ldots,e_{N'}'\in  \bbR^d\backslash \calS$ be such that $B_{M}(0)\backslash \calS \subseteq \bigcup_{i=1}^{N'} B_\eps(e_i') $. It is clear that \eqref{2.14} still holds, as does \eqref{2.16} with $e_i'$ and $N'$ in place of $e_i$ and $N$.
For any $t\geq M\delta^{-1}$, and any $s\in [\delta t, \delta^{-1}t]$ and $v\in B_{{M}s}(0)\backslash  s\calS$, there are $i\in\{1,\ldots,N'\}$ and $j\in\{1,\ldots,N\}$ such that
\eqref{2.18'} holds with $e_i'$ in place of $e_i$.
This, together with \eqref{2.14} and $s_j|e_i|\leq M\delta^{-1}$, yields 
\begin{align*}
\bbP & \left[\inf_{v\in ( s\calS)^c}\tau^0(0,v,\cdot)\leq s-t\eps - M\delta^{-1}t\eps -{M}(\delta^{-1} t\eps+Mt\eps+1)\,\text{ for some }s\in [\delta t, \delta^{-1}t]\right]\\
&\qquad\leq \bbP\left[\min_{i\in\{1,\cdots,N'\}\,\&\, j\in\{1,\cdots,N\}} \tau^0(0,ts_j e_i',\cdot)\leq ts_j-M\delta^{-1} t\eps \right]\\
&\qquad\leq \sum_{i=1}^{N'}\sum_{j=1 }^N \bbP\left[\tau^0(0,ts_je_i',\cdot)\leq ts_j-ts_j|e_i|\eps\right],
\end{align*}
which converges to $0$ as $t\to\infty$ by \eqref{2.16} and $|e_i'|\geq w(\frac{e_i'}{|e_i'|})$.
Therefore we get
\[
\lim_{t\to\infty}\bbP\left[\Gamma_{1/2}(s-2M(\delta^{-1}+M)t\eps -M,\cdot)\subseteq  s\calS\,\, \forall s\in [\delta t, \delta^{-1}t]\right]=1.
\]
Since $\eps<\frac{\delta}{2M(\delta^{-1}+M)}$, this and \eqref{2.19} yield \eqref{2.17} with $(\theta,\Lambda)=(\frac12,0)$.

Let us now extend this to the general case. Fix any $\delta\in(0,1)$ and again let $\eps:=\frac{\delta^2}{10M(1+M\delta)}$.  Stationarity of $(A,b,f)$ and \eqref{2.17} with $(\theta,\Lambda)=(\frac12,0)$ 
show that for each ${\sigma}\in(0,1)$, there is $C_{{\sigma}}\ge\frac 1\eps$ such that for any $(t_0,z)\in\bbR^{d+1}$ and $t\geq C_{{\sigma}}$,
\beq\lb{2.25}
\bbP\left[(1-\delta) s\calS\subseteq\Gamma_{1/2}(t_0+s,\cdot;t_0,z)-z\subseteq (1+\delta) s\calS\,\,\forall s\in [2^{-1}\delta t,2\delta^{-1} t]\right]\geq 1-{\sigma}.
\eeq
Fix any $\Lambda\ge 0$ and let $y_1,\ldots,y_{N''}\in B_\Lambda(0)$ be such that $B_\Lambda(0)\subseteq \bigcup_{i=1}^{N''} B_\eps(y_i)$. For  each $t\geq C_{{\sigma}}$ and  $x_0\in B_{\Lambda t}(0)$, there is $i\in\{1,\ldots,N''\}$ such that $|x_0-ty_i|\leq t\eps $.  Then  \eqref{2.31'} yields for all $s\geq M(2t\eps+1)$,
\beq\lb{2.32}
\begin{aligned}
\Gamma_{1/2}(s-M(2t\eps+1),\cdot ; & M(t\eps+1),ty_i)-ty_i\subseteq\Gamma_{1/2}(s,\cdot;0,x_0)-x_0\\
&\subseteq \Gamma_{1/2}(s+M(2t\eps+1),\cdot;-M(t\eps+1),ty_i)-ty_i
\end{aligned}
\eeq
(using \eqref{2.31'} twice with $s':=Mt\eps\ge M$).
Since $\eps\leq \frac{\delta^2}{10 M}$ and  $t\geq \frac 1\eps$, for any $s\geq \delta t$ we have
\beq\lb{2.34''}
M(3t\eps+2)\le 5Mt\eps\le \frac {\delta^2 t} 2 \le \frac {\delta s} 2,
\eeq
and therefore
\beq\lb{2.34}
(1-2\delta) s\calS \subseteq (1-\delta)(s-M(3t\eps+2))\calS.
\eeq
and
\beq\lb{2.34'}
(1+\delta)(s+M(3t\eps+2))\calS\subseteq (1+2\delta) s\calS 
\eeq
Since also $M(2t\eps+1)\le\delta t\le s$, \eqref{2.32}--\eqref{2.34'} imply
\begin{align*}
    &\bbP \big[(1-2\delta) s\calS\not\subseteq\Gamma_{1/2}(s,\cdot;0,x_0)-x_0 \text{ for some }(s,x_0)\in [\delta t, \delta^{-1}t]\times B_{\Lambda t}(0) \big]\\
    &\qquad \leq\sum_{i=1}^{N''} \bbP\big[(1-\delta) (s-M(3t\eps+2)) \calS \not\subseteq \Gamma_{1/2}(s-M(2t\eps+1),\cdot;M(t\eps+1),ty_i)-ty_i\\
       &\qquad\qquad\qquad\quad  \text{ for some }s\in [\delta t, \delta^{-1}t]\big]
\end{align*}
and
\begin{align*}
    &\bbP \big[ \Gamma_{1/2}(s,\cdot;0,x_0)-x_0\not\subseteq (1+2\delta) s\calS  \text{ for some }(s,x_0)\in [\delta t, \delta^{-1}t]\times B_{\Lambda t}(0) \big]\\
    &\qquad \leq  \sum_{i=1}^{N''} \bbP\big[ \Gamma_{1/2}(s+M(2t\eps+1),\cdot;-M(t\eps+1),ty_i)-ty_i  \not\subseteq (1+\delta) (s+M(3t\eps+2)) \calS \\
          &\qquad\qquad\qquad\quad  \text{ for some }s\in [\delta t, \delta^{-1}t]\big].
\end{align*}
Both right-hand sides are $\leq {\sigma}$, due to \eqref{2.25} with $(s,t_0,z)=(s\mp M(3t\eps+2),\pm M(t\eps+1),ty_i)$ (note  that \eqref{2.34''} shows that $s\pm M(3t\eps+2)\in [2^{-1}\delta t,2\delta^{-1}t]$ when $s\in [\delta t, \delta^{-1}t]$). Thus, after taking ${\sigma}\to0$ and then replacing $\delta$ by $\frac\delta 2$, we obtain \eqref{2.17} with $\theta=\frac12$. The general case follows from parabolic Harnack inequality and  hair-trigger effect as in the proof of Theorem \ref{T.2.5}.
\end{proof}






We now note that it suffices to prove Theorem \ref{T.1.0}(ii) with $y_\eps=0$ for all $\eps\in (0,1)$ because we assume space-time stationarity of $H$.  Recall that for now we assume that $H=(A,b,f)$, although this claim also holds in the general case.  

Let us first assume that  $f$ equals $f'$  from \eqref{0.9}, and that $G$ is bounded.  Then assume without loss that $\delta\in(0,1)$ is such that $G\subseteq B_{\delta^{-1}}(0)$.
Note that the ``unscaled" version of \eqref{3.0'} (with $y_\eps=0$) is
\beq\lb{3.12}
\theta\chi_{(\eps^{-1}G)^0_{\eps^{-1}\rho(\eps)}}\le u_\eps(0,\cdot,\omega)\leq \chi_{B_{\eps^{-1}\rho(\eps)}(\eps^{-1}G)}
\eeq
for $u_\eps(\cdot,\cdot,\omega):=u^\eps(\eps^{-1}\cdot,\eps^{-1}\cdot,\omega)$, which solves \eqref{1.0}.  Let us define 
\[
\Gamma_{\theta',\eps}(t,\omega):=\left\{x\in\bbR^d\,|\, u_\eps(t,x,\omega)\geq \theta'\right\}
\]
 for each $\theta'\in(0,1)$.
Therefore \eqref{3.6} will follow if we prove
\beq\lb{3.6'}
\lim_{\eps\to0}\bbP\left[\eps^{-1}G+\eps^{-1}(1-\delta)t\calS \subseteq \Gamma_{\theta',\eps}(\eps^{-1}t,\cdot) \subseteq \eps^{-1}G +\eps^{-1}(1+\delta)t\calS\,\,\forall t\in [\delta,\delta^{-1}]\right]=1.
\eeq

The hair-trigger effect, \eqref{2.12}, \eqref{3.12}, and the comparison principle imply that there is $M'\geq M$ such that with $s_\eps:=M\eps^{-1}\rho(\eps)+M'$ we have $B_{1}(\eps^{-1}G)\subseteq \Gamma_{1/2,\eps}(s_\eps,\omega)$  for all $(\eps,\omega)\in(0,1)\times\Omega$.
Then $u(0,\cdot,\omega;c_\eps, \eps^{-1}x_0) \leq u_\eps(s_\eps,\cdot,\omega)$ for any $x_0\in G$, so the comparison principle yields for all $(t,x_0)\in\bbR^+\times G$,
\beq\lb{3.9}
\Gamma_{\theta'}(\eps^{-1}t,\omega; c_\eps,\eps^{-1}x_0)\subseteq \Gamma_{\theta',\eps}(\eps^{-1}t+s_\eps,\omega).
\eeq
Since space-time stationarity of $H$ and Theorem \ref{T.2.6} with $(\frac \delta 2, \theta',\frac 1\delta)$ in place of $(\delta,\theta,\Lambda)$ yield
\[
\lim_{\eps\to0}\bbP\left[\eps^{-1}G+\eps^{-1}(1- 2^{-1}\delta) t\calS\subseteq \bigcup_{x_0\in G}\Gamma_{\theta'}(\eps^{-1}t,\cdot;c_\eps,\eps^{-1}x_0)\,\,\forall t\in \big[2^{-1}\delta,2\delta^{-1}\big]\right]=1,
\]
\eqref{3.9} and $\lim_{\eps\to0}\eps c_\eps=0$  show that the probability of just the first inclusion in \eqref{3.6'} converges to $0$ as $\eps\to 0$.

Next, from Theorem 1.2 in \cite{ZlaKPPspread}, and Remarks 2 and 3 following it, we see that 
for each $\delta>0$, 
there is  $C_{\delta}>0$ such that for each $(\eps,\omega)\in(0,1)\times \Omega$ and $t\geq C_\delta $ we get from  \eqref{3.12} that 
\[
u_\eps(t,\cdot,\omega) \le \delta+ \sup_{
z\in B_{\eps^{-1}\rho(\eps)}(\eps^{-1}G)
}u_z \left((1+ 4^{-1}\delta)t,\cdot,\omega\right),
\]
where $u_z$ is a solution to \eqref{1.0} with 
initial data $u_\eps(0,\cdot,\omega)\chi_{B_1(z)}$ (unit cubes were used in \cite{ZlaKPPspread} instead of unit balls, but simple scaling shows that these can be replaced by cubes of side-length $\frac 1d$, which are contained in the corresponding unit balls; recall also that we now have $f=f'$). 
This shows that for any $(\theta',\omega)\in(0,1)\times\Omega$ and $t\geq C_\delta $ we have
\beq\lb{3.10}
\Gamma_{\theta',\eps}(t,\omega)\subseteq \bigcup_{z\in B_{\eps^{-1}\rho(\eps)}(\eps^{-1}G)
}\Gamma_{\theta'-\delta}^z((1+4^{-1}\delta)t,\omega),
\eeq
where $\Gamma_{\theta'}^z(t,\omega):=\{x\in\bbR^d\,|\, u_z(t,x,\omega)\geq \theta'\}$.
Since $\frac12\min\{u,1-u\}\leq \min\{\frac12 u,1-\frac12 u\}$, we see that $\frac12 u_z$ is a subsolution to \eqref{1.0} 
and $\frac12u_z(0,\cdot,\omega)\leq \frac12\chi_{B_1(z)}$.  The comparison principle then shows that
\beq\lb{3.11}
\Gamma_{\theta'-\delta}^z \left((1+4^{-1}\delta)t,\omega \right)\subseteq 
\Gamma_{(\theta'-\delta)/2} \left((1+4^{-1}\delta)t,\omega;0,z \right).
\eeq
Hence \eqref{3.10}, \eqref{3.11}, and Theorem \ref{T.2.6} with 
$(\frac\delta 2, \frac{\theta'-\delta}2, \frac 2\delta)$ in place of $(\delta,\theta,\Lambda)$ yield for any $\delta<\theta'$,
\[
\lim_{\eps\to0}\bbP\left[ 
\Gamma_{\theta',\eps}(t,\cdot)
\subseteq \eps^{-1}B_{\rho(\eps)}(G)+\eps^{-1}(1+2^{-1}\delta)(1+4^{-1}\delta)t\calS\,\,\forall t\in [\delta,\delta^{-1}] \right]=1.
\]
But this, $(1+2^{-1}\delta)(1+4^{-1}\delta)<1+\delta$, and $\lim_{\eps\to0} \rho(\eps)=0$  show that the probability of just the second inclusion in \eqref{3.6'} converges to $0$ as $\eps\to 0$.
Therefore we proved \eqref{3.6'} and hence Theorem \ref{T.1.0}(ii) for all bounded $G$ when $f=f'$.

This now extends to general $f$ because Theorem 1.2 in \cite{ZlaKPPspread} shows that for any $\delta>0$ and all large enough $t$ we have
\[
\pm \big[ u(t,\cdot,\omega;0,0) - u' \left((1\pm \delta)t,\cdot,\omega;0,0\right) \big] \leq \delta,
\]
where $u'(\cdot,\cdot,\omega;0,z)$ solves \eqref{1.0} with $f'$ in place of $f$ and $u'(0,\cdot,\omega;0,z):=\frac12\chi_{B_1(z)}$.  This also shows that $f$ and $f'$ have the same $\calS$, which thus only depends on $H:=(A,b,f_u(\cdot,\cdot,0,\cdot))$.

Finally, the extension to unbounded $G$ is obtained as in the proof of Theorem 1.4 in \cite{ZlaKPPspread} (this uses that, similarly to \eqref{2.12}, perturbations to initial data propagate with speeds $\le M$).  Hence the proof of Theorem \ref{T.1.0}(ii) is finished.

\smallskip
{\it Remark.}  To prove Remark 6 after Theorem \ref{T.1.0}, one can use \eqref{11.2}.  This yields Lemma \ref{L.2.4} with \eqref{2.4} replaced by
\[
\liminf_{r\to\infty}\frac{\tau^{t_0}({x_0,x_0+re},\omega)}{r} \ge \lim_{r\to\infty}\frac{\bbE\left[ \tau^{t_0}({x_0,x_0+re},\cdot)\right]}{r}=\bar{\tau}(e),
\]
which then implies the second claim in \eqref{11.3} as in the proof of Theorem \ref{T.2.5}.  The argument at the end of Section \ref{s.3} then proves the remark.

\section{Proof of Theorem \ref{T.4.0}}\lb{s.5}

Let us fix any $\omega\in\Omega$. Theorem 7.2 in \cite{control} shows that under our hypotheses on $(c,v)$,
\beq\lb{5.15}
u(t,x,\omega):=\sup_{x\in \Gamma(t,\omega;0,y)} u(0,y,\omega)
\eeq
is a viscosity solution to \eqref{G1.1}. We  claim that $u(\cdot,\cdot,\omega)$ is uniformly continuous on $[0,T]\times\bbR^d$ for each $T>0$ whenever $u(0,\cdot,\omega)$ is uniformly continuous. Indeed, let $(t,x,y)\in \bbR^{2d+1}$ be such that $(t,x)$ is $\omega$-reachable from $(0,y)$.
Then there is $\alpha:[0,t]\to \overline{B_1(0)}\subseteq \bbR^d$ and an absolutely continuous path $\gamma:[0,t]\to\bbR^d$ such that $\gamma(0)=y$, $\gamma(t)=x$, and 
\[
\gamma'(s)=v(s,\gamma(s),\omega)+c(s,\gamma(s),\omega) \alpha(s)
\]
for a.e. $s\in[0,t]$. 
Pick any $(\tau,z)\in\bbR^{d+1}$, and extend $\alpha$ to $(t,\tau]$ by $0$ if $\tau>t$.
Then define $\beta:[0,\tau]\to\bbR^d$ via the terminal value problem $\beta(\tau)=z$ and
\[
\beta'(s)=v(s,\beta(s),\omega)+c(s,\beta(s),\omega)\alpha(s)
\]
for a.e. $s\in [0,\tau]$.
If we let $\tau':=\min\{t,\tau\}$ and $y':=\beta(0)$.
The two ODEs yield  
\beq\lb{5.16}
|y-y'|\leq e^{C\tau'}(|x-z|+C|t-\tau|),
\eeq
where 
\[
C:=\|v\|_{L^\infty} + \|\nabla_xv\|_{L^\infty}+\|\nabla_x c\|_{L^\infty}.
\]
It is clear that $(\tau,z)$ is $\omega$-reachable from $(0,y')$, so uniform continuity of $u(\cdot,\cdot,\omega)$ on $[0,T]\times\bbR^d$ for each $T>0$ follows from uniform continuity of $u(0,\cdot,\omega)$.
Then by, e.g. Exercise 3.9 in \cite{bardi}, we obtain that there is a unique uniformly continuous viscosity solution to \eqref{G1.1} for  any  uniformly continuous initial data.

Let us now prove (i). Since $\nabla_x\cdot v(t,\cdot,\omega)=0$ a.e.~for all $(t,\omega)\in\bbR\times\Omega$ and we have \eqref{Gc4}, it follows from Corollary 1.3 in \cite{BIN} that there is $M\geq 1$ such that for any $(t_0,x_0,\omega)\in\bbR^{d+1}\times\Omega$,
\beq\lb{2.12'}
B_{M^{-1}t}(x_0)\subseteq \Gamma(t,\omega;t_0,x_0) \qquad \text{ for all $t\geq M$},
\eeq
with $\Gamma$ from \eqref{G1.3}.
(We note that that result yields \eqref{2.12'} if we replace $c(t,x,\omega)$ in \eqref{G1.1} by the constant $\inf_{(t,x,\omega)\in\bbR^{d+1}\times\Omega}c(t,x,\omega)$, but then \eqref{2.12'} follows from the comparison principle.)
Since $v$ and $c$ are  bounded, after possibly increasing $M$ we also obtain 
\beq\lb{2.12''}
 \Gamma(t,\omega;t_0,x_0)\subseteq B_{{M}t}(x_0)\qquad \text{ for all $t\ge 0$}.
\eeq
Thus \eqref{2.12} holds for all $(t_0,x_0,\omega)\in \bbR^{d+1}\times\Omega$
with $\Gamma$ in place of $\Gamma_{1/2}$. 
We also define the arrival time to a point $x\in\bbR^d$, when starting at $(t_0,x_0)\in\bbR^{d+1}$, by
\beq\lb{G1.4}
\tau^{t_0}(x_0,x,\omega):=\inf\{t\geq 0\,|\,x\in\Gamma(t,\omega;t_0,x_0)\}.
\eeq

The same arguments as in Section \ref{s.3} yield
\eqref{2.31} and \eqref{2.1}--\eqref{2.2}, with $\Gamma$ in place of $\Gamma_{1/2}$. Moreover, \eqref{G1.3} shows that 
if $x'\in \Gamma(t,\omega;t_0,x_0)$, then $\Gamma(s',\omega;t_0+t,x')\subseteq \Gamma(t+s',\omega;t_0,x_0)$ for any $s'\ge 0$.
This with $t+s$ in place of $t$, together with \eqref{2.31} and \eqref{2.12'} with $s'$ in place of $t$, now yields \eqref{2.31'} with $\Gamma$ in place of $\Gamma_{1/2}$.

All this shows that if $H:=(c,v)$ is space-time stationary and 
$\lim_{s\to\infty}s^\alpha \phi_H(s)=0$ for some $\alpha>0$,
then  Lemma \ref{L.2.4} holds with $\tau^{t_0}$ from \eqref{G1.4} and with the same proof. So we can again define $\bar{\tau}$,
$w$, and $\calS$ via \eqref{2.4}, \eqref{2.3'}, and \eqref{2.3}.
The proof of Theorem \ref{T.2.5} also extends to this setting, with $\Gamma$ in place of $\Gamma_{1/2}$ (and without the sentence after \eqref{2.10'}).  We thus obtain for any $\delta\in (0,1)$ and $\Lambda\geq 1$ some $\Omega_{\Lambda,\delta}\subseteq\Omega$ with $\bbP[\Omega_{\Lambda,\delta}]=1$ such that
\beq\lb{2.10''}
x+(1-\delta) t\calS\subseteq\Gamma(t,\omega;0,x) 
\subseteq x+(1+\delta) t\calS
\eeq
holds for each $(x,\omega)\in B_{\Lambda t}(0)\times \Omega_{\Lambda,\delta}$ whenever $t$ is sufficiently large, depending on $\omega,\delta,\Lambda$ 
(this is just \eqref{2.10'} above).
Note that we need neither the parabolic Harnack inequality nor the hair-trigger effect here due to \eqref{G1.3}. 

Fix any $R>0$.  From \eqref{5.5} we obtain
\beq\lb{5.10}
u^\eps(t,x+y_\eps,\omega)=\sup_{x+y_\eps\in \eps\Gamma(\eps^{-1}t,\omega;0,\eps^{-1}y)} u^\eps(0,y,\omega).
\eeq
This, \eqref{5.1'}, \eqref{5.6}, \eqref{2.10''} with $\Lambda+R+M$ in place of $\Lambda$, and \eqref{2.12''} yield that for each $\omega\in\Omega':=\bigcap_{n\in\bbN}\Omega_{n,1/n}$ (so  $\bbP[\Omega']=1$) and $x\in B_R(0)$ we have
\beq\lb{5.9}
\bar{u}((1-\delta)t,x)-\rho(\eps)\leq u^\eps(t,x+y_\eps,\omega)\leq \bar{u}((1+\delta)t,x)+\rho(\eps)
\eeq
for any $\delta>0$ and $t\in[\frac 1R,R]$ whenever $\eps$ is small enough (depending on $\delta,R,\Lambda,\omega$).
If $\varphi$ is a modulus of continuity for $u_0$  (with $\lim_{r\to 0}\varphi(r)=0$), from \eqref{5.6}, \eqref{2.12''}, and $\calS\subseteq B_M(0)$, we also have 
\beq\lb{5.11}
|\bar{u}(t,x)-\bar{u}(t',x')|\leq \varphi(|x-x'|+M|t-t'|)
\eeq
for any $(t,t',x,x',\omega)\in [0,\infty)^2\times\bbR^{2d}\times\Omega$. 
This and \eqref{5.9} show that $u^\eps(\cdot,\cdot+y_\eps,\omega)\to \bar{u}$ locally uniformly on $\bbR^+\times\bbR^d$ as $\eps\to 0$ (for each $\omega\in\Omega'$).
This then easily extends to locally uniform convergence on $[0,\infty) \times\bbR^d$ (i.e., up to time 0), using \eqref{2.12''} with $t_0=0$, $\calS\subseteq B_M(0)$, \eqref{5.5}, \eqref{5.6}, \eqref{5.1'}, and $\lim_{r\to 0}\varphi(r)=\lim_{\eps\to0}\rho(\eps)=0$.
This finishes the proof of (i).


Let us now turn to (ii). As in the proof of Theorem \ref{T.1.0}(ii), it suffices to consider $y_\eps=0$.  Since we have \eqref{2.12}--\eqref{2.2} with $\Gamma$ in place of $\Gamma_{1/2}$,
the proofs of Lemma \ref{L.3.4} and Theorem \ref{T.2.6} with $\Gamma$ in place of $\Gamma_{\theta}$ extend to the present setting (again with no need for the parabolic Harnack inequality or the hair-trigger effect). 
Hence, for any $R>0$ and $\delta\in (0,1)$ we have
\beq\lb{5.8}
\lim_{t\to\infty}\bbP\left[(1-\delta) s\calS\subseteq\Gamma(s,\cdot;0,x_0)-x_0\subseteq (1+\delta) s\calS\,\,\forall (s,x_0)\in [R^{-1} t,R t]\times B_{R t}\right]=1.
\eeq
But then the argument proving \eqref{5.9} again applies, so after using \eqref{5.8} and \eqref{5.11} we obtain
%
\[
\begin{aligned}
\lim_{\eps\to0}\bbP\big[ |u^\eps(t,x,\omega)-\bar{u}(t,x)|\leq \varphi(M\delta R)+\rho(\eps)\,\,\forall (t,x)\in [R^{-1},R]\times B_{R}\big]=1.
\end{aligned}
\]
The result now follows by taking $\delta:=R^{-2}$  and $\eps\to 0$, although with $(t,x)\in [\delta,\delta^{-1}]\times B_{\delta^{-1}}$ inside the probability.  The extension up to time 0 is the same as in part (i).




\end{document}